\numberwithin{equation}{section}
\newtheorem{Theorem}{Theorem}[section]
\newtheorem{Corollary}[Theorem]{Corollary}
\newtheorem{Proposition}[Theorem]{Proposition}
 { \theoremstyle{definition}
\newtheorem{Definition}[Theorem]{Definition}}
\newcommand{\bbA}{{\mathbb A}}
\newcommand{\bbQ}{{\mathbb Q}}
\newcommand{\bbT}{{\mathbb T}}
\newcommand{\bbZ}{{\mathbb Z}}
\newcommand{\frake}{{\mathfrak{e}}}
\newcommand{\frakg}{{\mathfrak{g}}}
\newcommand{\frakk}{{\mathfrak{k}}}
\newcommand{\frakm}{{\mathfrak{m}}}
\newcommand{\rmf}{{\mathrm{f}}}
\newcommand{\rmt}{{\mathrm{t}}}
\newcommand{\rmA}{{\mathrm{A}}}
\newcommand{\rmC}{{\mathrm{C}}}
\newcommand{\rmD}{{\mathrm{D}}}
\newcommand{\rmE}{{\mathrm{E}}}
\newcommand{\rmG}{{\mathrm{G}}}
\newcommand{\rmK}{{\mathrm{K}}}
\newcommand{\rmL}{{\mathrm{L}}}
\newcommand{\rmM}{{\mathrm{M}}}
\newcommand{\rmR}{{\mathrm{R}}}
\newcommand{\rmS}{{\mathrm{S}}}
\newcommand{\rmT}{{\mathrm{T}}}
\newcommand{\ov}{\overline}
\newcommand{\ul}{\underline}
\newcommand*{\longhookrightarrow}{ \lhook\joinrel\relbar\joinrel\rightarrow}
\newcommand*{\longtwoheadrightarrow}{\relbar\joinrel\twoheadrightarrow}
\newcommand{\ra}{\rightarrow}
\newcommand{\hra}{\hookrightarrow}
\newcommand{\lra}{\longrightarrow}
\newcommand{\lhra}{\longhookrightarrow}
\newcommand{\lthra}{\longtwoheadrightarrow}
\newcommand{\mto}{\mapsto}
\newcommand{\lmto}{\longmapsto}
\newcommand{\ZZ}{{\mathbb{Z}}}
\newcommand{\RR}{{\mathbb{R}}}
\newcommand{\CC}{{\mathbb{C}}}
\renewcommand{\Im}{\operatorname{\mathfrak{Im}}}
\newcommand{\Hom}{\operatorname{\mathrm{Hom}}}
\newcommand{\Mat}[1]{{\mathrm{Mat}_{#1}}}
\newcommand{\GL}[1]{{\mathrm{GL}_{#1}}}
\newcommand{\PGL}[1]{{\mathrm{PGL}_{#1}}}
\newcommand{\SL}[1]{{\mathrm{SL}_{#1}}}
\newcommand{\Sp}[1]{{\mathrm{Sp}_{#1}}}
\newcommand{\Orth}[1]{{\mathrm{O}_{#1}}}
\newcommand{\U}[1]{{\mathrm{U}_{#1}}}
\newcommand{\T}{{\rmt}}
\newcommand{\rT}{{\,{}^\T\!}}
\newcommand{\sym}{{\mathrm{sym}}}
\newcommand{\bbone}{{\mathds{1}}}
\newcommand{\lspan}{\operatorname{span}}
\newcommand{\HS}{\mathbb{H}}
\newcommand{\Rep}{\operatorname{Rep}}
\newcommand{\gKmodule}{$(\frakg,\rmK)$-module}
\newcommand{\gKmodules}{$(\frakg,\rmK)$-modules}
\newcommand{\HSg}{{\HS^{(g)}}}
\newcommand{\PSL}[1]{{\mathrm{PSL}_{#1}}}
\newcommand{\GSp}[1]{{\mathrm{GSp}_{#1}}}
\newcommand{\PGSp}[1]{{\mathrm{PGSp}_{#1}}}
\newcommand{\Ga}[1]{{\Gamma^{(#1)}}}
\newcommand{\Af}{{{\bbA_\rmf}}}
\newcommand{\Qp}{{{\bbQ_p}}}
\newcommand{\Zp}{{\bbZ_p}}
\newcommand{\fraksp}[1]{{\mathfrak{sp}_{#1}}}
\newcommand{\KgRR}{{K^{(g)}_\infty}}
\newcommand{\KgQp}{{K^{(g)}_p}}
\newcommand{\KgAf}{{K^{(g)}_\rmf}}
\newcommand{\ovRep}{{\ov{\mathrm{Rep}}}}
\newcommand{\IrrRep}{{\ov{\mathrm{IrrRep}}}}
\newcommand{\IrrRepAf}{{\IrrRep\big( \KgAf \big)}}
\newcommand{\IrrRepRR}{{\IrrRep\big( \KgRR \big)}}
\newcommand{\Mg}{{\rmM^{(g)}}}
\newcommand{\aMg}[1]{{\sideset{^{[#1]}}{^{(g)}}{\mathop{\rmM}}}}
\newcommand{\Mgall}{{\Mg( \bullet \otimes \bullet)}}
\newcommand{\aMgall}{{\aMg{\bullet}( \bullet \otimes \bullet)}}
\newcommand{\ulcdot}{{\mathop{\ul{\,\cdot\,}}}}
\newcommand{\ulotimes}{{\mathop{\ul{\otimes}}}}
\newcommand{\Ind}{{\mathrm{Ind}}}
\begin{document}

\allowdisplaybreaks

\renewcommand{\thefootnote}{}

\renewcommand{\PaperNumber}{108}

\FirstPageHeading

\ShortArticleName{Hyper-Algebras of Vector-Valued Modular Forms}

\ArticleName{Hyper-Algebras of Vector-Valued Modular Forms\footnote{This paper is a~contribution to the Special Issue on Modular Forms and String Theory in honor of Noriko Yui. The full collection is available at \href{http://www.emis.de/journals/SIGMA/modular-forms.html}{http://www.emis.de/journals/SIGMA/modular-forms.html}}}

\Author{Martin RAUM}

\AuthorNameForHeading{M.~Raum}

\Address{Chalmers tekniska h\"ogskola och G\"oteborgs Universitet, \\ Institutionen f\"or Matematiska vetenskaper, SE-412 96 G\"oteborg, Sweden}
\Email{\href{mailto:martin@raum-brothers.eu}{martin@raum-brothers.eu}}
\URLaddress{\url{http://raum-brothers.eu/martin/}}

\ArticleDates{Received May 07, 2018, in final form September 30, 2018; Published online October 04, 2018}

\Abstract{We define graded hyper-algebras of vector-valued Siegel modular forms, which allow us to study tensor products of the latter. We also define vector-valued Hecke operators for Siegel modular forms at all places of~${\mathbb Q}$, acting on these hyper-algebras. These definitions bridge the classical and representation theoretic approach to Siegel modular forms. Combining both the product structure and the action of Hecke operators, we prove in the case of elliptic modular forms that all cusp forms of sufficiently large weight can be obtained from products involving only two fixed Eisenstein series. As a byproduct, we obtain inclusions of cuspidal automorphic representations into the tensor product of global principal series.}

\Keywords{Siegel modular forms; vector-valued Hecke operators; automorphic representations}

\Classification{11F40; 11F60; 11F70}

\renewcommand{\thefootnote}{\arabic{footnote}}
\setcounter{footnote}{0}

\section{Introduction}

Products of scalar-valued modular forms can be used to construct new ones, and in special situations they reveal deep arithmetic information. For example, Rankin showed in~\cite{rankin-1952} that the Petersson scalar product of an elliptic cusp form~$f$ and the product $E_k E_l$ of two Eisenstein series, can be expressed in terms of special $L$-values attached to~$f$. This very relation reappeared in~\cite{kohnen-zagier-1984}, where Kohnen and Zagier define a ${\mathbb Q}$-structure on elliptic modular forms that is different from the one originating in Fourier expansions. The product of scalar-valued modular forms naturally leads to the definition of the graded algebra $\rmM(\bullet)$ of modular forms.

Vector-valued modular forms in the elliptic case can be associated with any representation~$\rho$ of the modular group $\SL{2}(\ZZ)$. As opposed to scalar-valued ones, their product structure has mostly been neglected. Instead, one considers graded modules $\rmM(\bullet \otimes \rho )$ over~$\rmM(\bullet)$. For example, Marks and Mason proved in~\cite{marks-mason-2010} that $\rmM(\bullet \otimes \rho )$ is free over $\rmM(\bullet)$ and that its rank relates directly to the eigenvalues of $\rho\big({-}I^{(2)})$ for $I^{(2)}$ the $2 \times 2$ identity matrix.

It is possible to define tensor products of vector-valued modular forms by means of the tensor product of smooth functions from the Poincar\'e upper half plane $\HS = \{ \tau \in \CC\colon \Im \tau > 0 \}$ to the representation spaces $V(\rho)$ and $V(\rho')$ of $\rho$ and $\rho'$
\begin{gather*}
 \otimes\colon \
 \rmC^\infty \big( \HS \ra V(\rho) \big)
 \times
 \rmC^\infty \big( \HS \ra V(\rho') \big)
\lra
 \rmC^\infty \big( \HS \ra V(\rho \otimes \rho') \big),\\
\hphantom{\otimes\colon}{} \ (f \otimes g)(\tau) \lmto f(\tau) \otimes g(\tau).
\end{gather*}
Tensor products of vector-valued modular forms have seldom been studied. In~\cite{raum-2017}, we employed them to express elliptic cusp forms of any level as tensor products of at most two Eisenstein series. Tensor products can be conveniently subsumed under the concept of hyper-algebras. Our description of tensor products of and differential operators on almost holomorphic Siegel modular forms that we gave in~\cite{klemm-poretschkin-schimannek-raum-2015}, for example, employed hyper-algebras to classify almost holomorphic Siegel modular forms. In this work, we review the concept of hyper-algebras in the context of modular forms.

Hyper-algebras mimic algebras but they have multivalued multiplication. In particular, they are natural analogues of hyper-groups~\cite{wall-1937}. Note that we suppress Wall's assumption of dimensionality of hyper-groups, which does not hold for examples that we treat. Hyper-groups arise naturally in the context of double cosets in group theory. The definition of a group~$G$ includes the product~$a \cdot b \in G$ of $a, b \in G$. Given a sufficiently nice subgroup~$H$~-- that is, if $(G,H)$ is a~Gelfand pair~-- it is standard in modular forms to define a (commutative) Hecke algebra. Specifically, Hecke algebras assign meaning to the product of double cosets $H a H \cdot H b H$. For instance, if $G = \GSp{g}({\mathbb Q})$ and $K = \GSp{g}(\ZZ)$, then the Hecke algebra with integral coefficients is free as a $\ZZ$-module and has basis $H c H$ for certain diagonal $c \in G$. Every product of double cosets can be written as a sum $\sum_{c \in G} m_c H c H$, where $0 \le m_c \in \ZZ$ and $m_c \ne 0$ for only finitely many~$c$. The corresponding notion of hyper-groups, to the author's knowledge, has not yet appeared in the context of modular forms, but is standard in other areas of mathematics. Rephrasing the product in Hecke algebras, we arrive at a notion of multivalued multiplication of double cosets. One can decompose $H a H = \bigcup_{c \in \rmL(H,a)} H c$ and $H b H = \bigcup_{c \in \rmL(H,b)} H c$ as a disjoint union of left cosets with coset representatives $\rmL(H,a), \rmL(H,b) \subseteq G$. By definition of the multiplication in Hecke algebras, the multiset $\{\!\{ H ( d_a \cdot d_b )\colon d_a \in \rmL(H,a), d_b \in \rmL(H,b) \}\!\}$ has a decomposition into the disjoint union of $\{\!\{ H d_c \colon d_c \in \rmL(H,c) \}\!\}$ where each $\rmL(H,c)$ occurs with multiplicity $m_c$ that occurred before. A more detailed, explicit explanation can be found in~\cite{krieg-1990}. Summarizing, we obtain a multiplication of double cosets that takes values in multisets of double cosets:
\begin{gather*}
 H \backslash G \slash H \times H \backslash G \slash H\lra \mathrm{Multiset}( H \backslash G \slash H ).
\end{gather*}
In this way, we obtain a commutative hyper-group. Note that every group can be naturally viewed as a hyper-group. Vice versa, a hyper-group whose multiplication takes values in multisets of size~$1$ is a group.

It is possible to extend the definition of hyper-groups to hyper-algebras. Addition is axioma\-tized as in the case of algebras. Multiplication takes values in the set of all subspaces and compatibility relations that mimic those for algebras are imposed on it. We give a precise definition of hyper-algebras in Section~\ref{sec:hyper-algebras}. Given an algebra~$A$ we obtain a hyper-algebra by assigning to two elements the module spanned by their product. In general, it is not possible to recover the algebra from this, because the hyper-algebra product $a \ul{\,\cdot\,} b$ associated with $a, b \in A$ and $(r a) \ul{\,\cdot\,} (r' b)$ is the same for all units $r$, $r'$ in the base ring~$R$.

Siegel modular forms are assigned to a weight~$\sigma$ and a type~$\rho$, which are representations of $\GL{g}(\CC)$ and $\Sp{g}(\ZZ)$, respectively. Given Siegel modular forms $f$ and $g$ of weights and types $\sigma_f$, $\rho_f$, $\sigma_g$, and $\rho_g$, then their tensor product~$f \otimes g$ has weight~$\sigma_f \otimes \sigma_g$ and type $\rho_f \otimes \rho_g$. Even if the weights and types of $f$ and $g$ are irreducible, their tensor products can be reducible. On the other hand, to avoid redundancies, graded modules of Siegel modular forms are preferably build from irreducible weights and types. As a consequence, tensor products do not yield an algebra structure on such modules.

Since weights and types behave similarly with respect to the construction that we discuss now, we focus on the former. To remedy the lack of algebra structures on the graded module of Siegel modular forms, we have previously suggested to attach to $f$ and $g$ the span
\begin{gather*}
 f \,\ulotimes\, g= \lspan \big\{ \phi \circ ( f \otimes g ) \colon \phi \in \Hom( \sigma_f \otimes \sigma_g, \sigma )\big\}\subset \bigoplus_\sigma \Mg (\sigma),
\end{gather*}
where $\sigma$ runs through a fixed set of representatives of isomorphism classes of irreducible weights, and $\Mg(\sigma)$ denotes the corresponding space of Siegel modular forms. In~\cite{klemm-poretschkin-schimannek-raum-2015}, we identified the resulting structure as a hyper-algebra. It is a special case of hyper-algebras of Siegel modular forms, defined in Section~\ref{ssec:modular-forms-hyper-algebra}.

In~\cite{klemm-poretschkin-schimannek-raum-2015}, we also defined the concept of differential hyper-algebras to subsume the action of covariant differential operators. In Section~\ref{ssec:hyper-algebras:differential-operators}, we revisit this definition and reinterpret it in terms of a Hecke operator~$\rmT_\infty$ at the infinite place of~${\mathbb Q}$. Returning to the case of general weights and types, we also study vector-valued Hecke operators~$\rmT_v$ at all places~$v$ of~${\mathbb Q}$. If $v = p$ is a prime, then $\rmT_p$ generalizes classical Hecke operators for Siegel modular forms. It extends the vector-valued Hecke operators for elliptic modular forms that we have already defined in~\cite{raum-2017}. The polynomial algebra~$\bbT$ generated by formal elements~$\rmT_v$ acts by means of these Hecke operators on the hyper-algebra of Siegel modular forms. Specifically, at the infinite place, the Hecke operator acts as a hyper-derivation. At the finite places, Hecke operators respect the hyper-product structure. A precise description can be found in Section~\ref{ssec:hyper-algebras:hecke-algebra}.

Section~\ref{sec:structure-results} contains a more detailed study of tensor products and the action of~$\bbT$ on modular forms is studied in more detail in the genus~$1$ case. Let $\rmS^{(1)}( k)$ be the space of genus~$1$ cusp forms of weight~$k$ and level~$1$. Write $E^{(1)}(l)$ for the level~$1$ Eisenstein series of weight~$l$. The following is a slight modification of the main result in~\cite{raum-2017}.
\begin{Theorem}\label{thm:main-theorem:classical}
Let $l,l' \ge 4$ be even integers and $\rho$ a finite-dimensional representation of $\SL{2}(\ZZ)$ whose kernel is a congruence subgroup. Then for every $k \ge l + l'$, we have
\begin{gather}\label{eq:thm:main-theorem:classical}
 \rmS^{(1)}( k \otimes \rho )\subset \bbT \big( \bbT E^{(1)}(l) \,\ulotimes\, \bbT E^{(1)}(l') \big).
\end{gather}
\end{Theorem}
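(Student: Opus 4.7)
The plan is to reduce the theorem to the main result of \cite{raum-2017}, using the Hecke algebra~$\bbT$ both to produce higher-level Eisenstein series from $E^{(1)}(l)$ and $E^{(1)}(l')$ and to raise the weight from $l+l'$ to~$k$.

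First I would decompose $\rho$ into irreducibles, which is allowed since both sides of~\eqref{eq:thm:main-theorem:classical} are additive in~$\rho$. As $\ker\rho$ is a congruence subgroup, $\rho$ factors through $\SL{2}(\ZZ/N\ZZ)$ for some~$N$, and Frobenius reciprocity identifies $\rmS^{(1)}(k\otimes\rho)$ with the $\rho$-isotypic component of the induced representation attached to cusp forms for $\Gamma(N)$ of weight~$k$. The main theorem of \cite{raum-2017} then expresses every such cusp form of weight at least $l+l'$ as a linear combination of products of two Eisenstein series of weights $l$ and $l'$ for~$\Gamma(N)$. Rephrased in the hyper-algebra language, after projecting to irreducible types this embeds the cusp form into $E_{\Gamma(N)}(l) \ulotimes E_{\Gamma(N)}(l')$, modulo one application of the hyper-derivation $\rmT_\infty$ from Section~\ref{ssec:hyper-algebras:differential-operators} to absorb any weight difference $k-(l+l')>0$.

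The core step is then to exhibit each induced level-$N$ Eisenstein series $E_{\Gamma(N)}(l)$, viewed as a vector-valued modular form for $\SL{2}(\ZZ)$, as an element of $\bbT E^{(1)}(l)$. By the construction in Section~\ref{ssec:hyper-algebras:hecke-algebra}, $\rmT_p$ acts via the double coset space $\SL{2}(\Zp) \backslash \SL{2}(\Qp) / \SL{2}(\Zp)$; since $E^{(1)}(l)$ arises from a spherical vector in a global principal series representation, its $\rmT_p$-orbit spans the $\SL{2}(\Zp)$-invariant vectors of the local principal series at~$p$. Combining these orbits over primes dividing~$N$ realizes each $E_{\Gamma(N)}(l)$, and similarly for $E_{\Gamma(N)}(l')$.

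The principal obstacle lies in this last identification: one must verify that the classical-to-adelic translation of the vector-valued Hecke operators is compatible with $\ulotimes$ and with projection onto $\rho$-isotypic components, so that the construction lands inside $\bbT(\bbT E^{(1)}(l) \ulotimes \bbT E^{(1)}(l'))$ rather than merely in $\rmM^{(1)}(k\otimes\rho)$. Once this compatibility is established, \cite{raum-2017} together with $\rmT_\infty$ yields the inclusion~\eqref{eq:thm:main-theorem:classical}.
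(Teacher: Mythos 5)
There is a genuine gap, and it sits exactly where you wrote ``modulo one application of the hyper-derivation $\rmT_\infty$ to absorb any weight difference''. The main result of \cite{raum-2017}, recalled as~\eqref{eq:surjectivity:genus-1-finite-places} in this paper, does \emph{not} express cusp forms of every weight $k \ge l+l'$ through Eisenstein series of the fixed weights $l$ and $l'$: it requires the two Eisenstein weights to sum to exactly~$k$, i.e., it concerns $\bbT_\rmf E^{(1)}(l) \,\ulotimes\, \bbT_\rmf E^{(1)}(k-l)$. Since the finite-place operators preserve the weight, every element of $\bbT_\rmf E^{(1)}(l) \,\ulotimes\, \bbT_\rmf E^{(1)}(l')$ has weight exactly $l+l'$, so the whole case $k > l+l'$ --- which is precisely the new content of Theorem~\ref{thm:main-theorem:classical} beyond \cite{raum-2017} --- must come from the infinite place. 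This is not a formality: one needs $t = (k-l-l')/2$ applications of the raising operator, not one, and, much more seriously, one needs an argument that cusp forms of weight~$k$ actually lie in the span of such raised products. Your proposal asserts this rather than proves it. That no purely formal argument can work is shown by Proposition~\ref{prop:surjectivity-fails-small-weight}: the analogous spanning statement is \emph{false} for $k < l+l'$, so some analytic input distinguishing the two regimes is unavoidable.

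The paper's proof (Theorem~\ref{thm:products-for-differential-operators}) supplies exactly this missing step. Using that the right-hand side of~\eqref{eq:surjectivity:genus-1-infinite-place} is a $\bbT_\rmf$-module, it reduces to showing that a newform~$f$ for $\Gamma_0(N)$ vanishes if its twists $f_{\epsilon_D}$ are orthogonal to all $\Ind\big(E_{l,\bbone_{|D|}}\big) \otimes \rmR^t\big(E_{l',\bbone_{|D|},\infty}\big)$; it then unfolds the regularized Petersson product against a real-analytic vector-valued Eisenstein series to obtain a ratio of special $L$-values, and concludes by nonvanishing: away from the central point because the relevant Euler products converge absolutely, and at the central point because vanishing of $L\big(f_{\epsilon_D},\frac{k}{2}\big)$ for all negative fundamental discriminants~$D$ would kill the Shintani lift of~$f$ (Waldspurger, Kohnen--Zagier), forcing $f = 0$. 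None of this Rankin--Selberg and nonvanishing mechanism appears in your proposal, so the inclusion~\eqref{eq:thm:main-theorem:classical} is not established for any $k > l+l'$. A smaller inaccuracy: the $\rmT_p$-orbit of a spherical vector does not merely span the $\KgQp$-invariants --- those are one-dimensional in an unramified principal series; to reach level-$N$ Eisenstein series one needs vectors fixed by deeper congruence subgroups, which is what the vector-valued operators $\rmT_M$ of Section~\ref{ssec:modular-forms:hecke-operators} produce, as in \cite{raum-2017}.
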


The right-hand side of~\eqref{eq:thm:main-theorem:classical} can be efficiently computed and its left-hand side is genuinely interesting. We argue that the proof can serve as a blueprint to a whole family of analogous statements. Thus Theorem~\ref{thm:main-theorem:classical} sets the path towards a general method of computing Siegel modular forms. Section~\ref{sec:structure-results} contains a specific conjecture in the case of genus~$2$.

Next, we discuss a connection between Theorem~\ref{thm:main-theorem:classical} and automorphic representations for $\PGL{2}$. To this end, we restrict to representations~$\rho$ whose kernel is a congruence subgroup. Vector-valued Hecke operators defined in this paper can be related to adelic automorphic representations. Covariant differential operators correspond to the action of an appropriate Lie algebra on Harish-Chandra modules. Hecke operators at finite places arise directly from a representation of a group over~$\Qp$. It is natural to ask how much Theorem~\ref{thm:main-theorem:classical} relates to results from automorphic representation theory. We make such a relation precise, and determine some constituents in the tensor product of principal series. Specifically, we consider automorphic representations for~$\PGL{2}$. We let $\varpi(k-1, \bbone)$ be the principal series that is unramified at the finite places and has Harish-Chandra parameter $k - 1$ at infinity. Let $\varpi_\infty(k-1)$ be the discrete series with Harish-Chandra parameter~$k-1$.
\begin{Theorem}\label{thm:main-theorem:automorphic}Given even integers $l,l' \ge 4$, then
\begin{gather*}
 \varpi\lhra \varpi(l-1,\bbone) \otimes \varpi(l'-1, \bbone)
\end{gather*}
for every cuspidal automorphic representation $\varpi = \varpi_\infty(k-1) \otimes \varpi_\rmf$ with $k \ge l + l'$ and $\varpi_\rmf$ a~representation of~$\PSL{2}(\bbA_\rmf)$.
\end{Theorem}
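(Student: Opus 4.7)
The plan is to deduce Theorem~\ref{thm:main-theorem:automorphic} from Theorem~\ref{thm:main-theorem:classical} via the standard dictionary between vector-valued elliptic modular forms and automorphic representations of $\PGL{2}(\bbA)$. First I would translate the input: given a cuspidal $\varpi = \varpi_\infty(k-1) \otimes \varpi_\rmf$ with $\varpi_\rmf$ a representation of $\PSL{2}(\Af)$, strong approximation realizes a suitable space of $K_\rmf$-invariants in $\varpi_\rmf$ as a finite-dimensional $\SL{2}(\ZZ)$-representation $\rho$ whose kernel is a congruence subgroup, and the lowest $\KgRR$-type of $\varpi_\infty(k-1)$ gives a holomorphic cusp form $f \in \rmS^{(1)}(k \otimes \rho)$ generating~$\varpi$. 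Theorem~\ref{thm:main-theorem:classical} then yields $f \in \bbT\big( \bbT E^{(1)}(l) \,\ulotimes\, \bbT E^{(1)}(l') \big)$.

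Next I would translate the right-hand side. Each Eisenstein series $E^{(1)}(l)$ is the lowest weight vector of the unique holomorphic discrete series summand in the restriction of $\varpi(l-1,\bbone)$ to $\PGL{2}(\RR)$, realized inside the adelic automorphic spectrum as the everywhere-unramified principal series. The Hecke operators $\rmT_p$ act on the classical side exactly as the spherical Hecke algebra of $\PGL{2}(\Qp)$ acts on $\varpi_p(l-1,\bbone)^{\PGL{2}(\Zp)}$, and applying $\rmT_p$ for varying $p$ to $E^{(1)}(l)$ together with changes of level sweeps out, by strong approximation, any prescribed finite-level vector inside $\varpi_\rmf(l-1,\bbone)$. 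At the infinite place, the reinterpretation of $\rmT_\infty$ in Section~\ref{ssec:hyper-algebras:hecke-algebra} as a hyper-derivation coming from covariant differential operators identifies the $\bbT_\infty$-span of $E^{(1)}(l)$ with the full $(\frakg,\KgRR)$-module generated by its lowest weight vector, which is the holomorphic summand of $\varpi_\infty(l-1,\bbone)$. Hence $\bbT E^{(1)}(l)$ corresponds to a distinguished $(\frakg,\rmK)$-stable subrepresentation of $\varpi(l-1,\bbone)$ that generates it under the remaining action of $\PSL{2}(\Af)$, and similarly for $l'$.

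Under these dictionaries the hyper-product $\ulotimes$ becomes the pointwise tensor product of adelic automorphic forms, so the containment from Theorem~\ref{thm:main-theorem:classical} exhibits the automorphic form corresponding to~$f$, and hence the entire $(\frakg,\rmK) \times \PSL{2}(\Af)$-orbit generating~$\varpi$, as lying in the image of the multiplication map $\varpi(l-1,\bbone) \otimes \varpi(l'-1,\bbone) \to \cont^\infty\big( \PSL{2}(\bbQ) \backslash \PSL{2}(\bbA) \big)$. Because $\varpi$ is irreducible and cuspidal, the $\PSL{2}(\bbA)$-morphism into this image pulls back to a nonzero and hence injective map $\varpi \hookrightarrow \varpi(l-1,\bbone) \otimes \varpi(l'-1,\bbone)$, as required.

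The principal obstacle is the archimedean identification: one must verify that the hyper-derivation $\rmT_\infty$, defined through covariant differential operators on holomorphic data, really does exhaust the Harish-Chandra module of the holomorphic discrete series summand inside the restriction of $\varpi_\infty(l-1,\bbone)$, and that forming the algebraic tensor product of these $(\frakg,\KgRR)$-modules recovers the $\KgRR$-finite vectors of $\varpi_\infty(k-1)$ realized by the classical cusp form~$f$. Once the $K$-type bookkeeping is in place the finite-place statements are routine from classical Hecke theory and strong approximation.
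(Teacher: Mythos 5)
Your proposal is correct and takes essentially the same route as the paper: reduce to the classical statement (Theorem~\ref{thm:main-theorem:classical}) via the newform--cuspidal-representation dictionary, then translate the hyper-operations place by place --- $\bbT_\infty$ to the $(\frakg,\rmK)$-action and $\bbT_p$ to the spherical Hecke action --- and globalize by strong approximation, which is precisely the content of the paper's Corollaries~\ref{cor:tensor-product-at-infinity} and~\ref{cor:tensor-product-at-p} and Theorem~\ref{thm:global-automorphic-products}. Even the points you flag as remaining bookkeeping (the archimedean identification of the $\bbT_\infty$-span with a $(\frakg,\rmK)$-submodule of $\varpi_\infty(l-1,\bbone)$, and the passage from membership in the image of the multiplication map to an actual embedding) are handled at the same level of brevity in the paper's own proof.
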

When phrased in this language, similarity to the Gross--Prasad conjectures for the inclusion $\Orth{1,2} \cong \SL{2} \hra \SL{2} \times \SL{2} \cong \Orth{2,2}$ becomes apparent.

In Section~\ref{sec:modular-forms}, we collect preliminaries on Siegel modular forms. In Section~\ref{sec:hyper-algebras}, we define hyper-algebras of Siegel modular forms. In Section~\ref{sec:structure-results}, we show that products of certain Eisenstein series yield cusp forms. In Section~\ref{sec:automorphic-representations}, we give an interpretation of our results in terms of adelic automorphic representations.

\section{Modular forms}\label{sec:modular-forms}

\subsection{The classical setup}

The Siegel upper half space $\big\{ \tau \in \Mat{g}(\CC)\colon \rT\tau = \tau,\, \Im\tau > 0 \big\}$ of genus~$g$ is denoted by~$\HS^{(g)}$. It carries an action of the real symplectic group
\begin{gather*}
 \Sp{g}(\RR)= \big\{ \gamma \in \Mat{2g}(\RR) \colon \rT\gamma J^{(g)} \gamma = J^{(g)} \big\},\qquad J^{(g)} = \left(\begin{smallmatrix}
 0 & -I^{(g)} \\
 I^{(g)} & 0
 \end{smallmatrix}\right),
\end{gather*}
where $I^{(g)}$ is the $g \times g$ identity matrix. This action is explicitly given by $\gamma \tau = (a \tau + b) (c \tau + d)^{-1}$, where we employ the block decomposition of $\gamma = \left(\begin{smallmatrix} a & b \\ c & d \end{smallmatrix}\right)$. The subgroup $\Ga{g} = \Sp{g}(\ZZ)$ of symplectic transformation matrices with integral entries is called the Siegel modular group of genus~$g$.

Generally, a complex representation~$\sigma$ of~$\GL{g}(\CC)$ is called a weight. For the purpose of this paper, we restrict to representations that factor through $\GL{g}(\CC) \slash \big\{ {\pm} I^{(g)} \big\}$. A finite-dimensional representation~$\rho$ of $\Sp{g}(\ZZ)$ is called a type. Throughout this note, we focus on types whose kernel is a congruence subgroup that contains $-I^{(g)}$. The representation spaces of $\sigma$ and $\rho$ are denoted by~$V(\sigma)$ and~$V(\rho)$. A weight and a type together determine a slash action
\begin{gather*}
 \big( f \big|_{\sigma,\rho} \gamma \big) (\tau)= \sigma(c \tau + d)^{-1} \rho(\gamma)^{-1} f(\gamma \tau)
\end{gather*}
on functions $f\colon \HS^{(g)} \ra V(\sigma) \otimes V(\rho)$.

The space of genus~$g$ Siegel modular forms of weight~$\sigma$ and type~$\rho$ is defined as the space of holomorphic functions~$f \colon \HS^{(g)} \ra V(\sigma) \otimes V(\rho) $ such that $f |_{\sigma,\rho} \gamma = f$ for all $\gamma \in \Ga{g}$ and which satisfy $f(x + {\rm i}y) = O(1)$ as $y \ra \infty$, if $g=1$. We write
\begin{gather*}
 \Mg( \sigma \otimes \rho)= \rmE^{(g)}(\sigma \otimes \rho ) \oplus \rmS^{(g)}( \sigma \otimes \rho)
\end{gather*}
for this space, the space of Eisenstein series, and the space of cusp forms, respectively.

\subsection{The symplectic group and its Lie algebra}\label{ssec:modular-forms:symplectic-group}

We write $\rmG = \PGSp{g}$ for the ${\mathbb Q}$-split algebraic group of projective symplectic similitudes. Throughout we work with the model
\begin{gather*}
 \PGSp{g}(\ZZ)= \big\{ \gamma \in \Mat{2g}(\ZZ) \colon \rT\gamma J^{(g)} \gamma = s(\gamma) J^{(g)},\, s(\gamma) \in \ZZ \big\}\big\slash \big\{ s I^{(g)} \colon s \in \ZZ \big\}.
\end{gather*}
Compact subgroups $\KgRR$ and $\KgQp$ of $\rmG(\RR)$ and $\rmG(\Qp)$ are $\U{g}(\RR) \slash \big\{{\pm}I^{(g)}\big\}$ and $\PGSp{g}(\Zp)$, where the former is embedded into $\rmG(\RR)$ by $a {\rm i} + b \mto \left(\begin{smallmatrix}a & -b \\ b & a \end{smallmatrix}\right)$. Weights correspond to complex representations of $\KgRR$ by means of the restriction along $\U{g}(\RR) \slash \big\{{\pm}I^{(g)}\big\} \ra \GL{g}(\CC) \slash \big\{{\pm}I^{(g)}\big\}$.

We let $\ovRep\big(\KgRR\big)$ be a fixed set of representatives of isomorphism classes of finite-dimensional, complex representations of $\KgRR$. For all $g$, the representation $\det^k \colon \U{g}(\RR) \ra \CC$, $k \mto \det(k)$ with $k \in \ZZ$, $2 \,|\, g k$ yields an irreducible representation of~$\KgRR$ via the above isomorphism of $\KgRR$ and $\U{g}(\RR) \slash \big\{{\pm}I^{(g)}\big\}$. In the context of spaces of modular forms, we will occasionally write $k$ instead of $\det^k$ to match more closely the classical notation. For example,
\begin{gather*}
 \Mg( k \otimes \rho) = \Mg\big({\det}^k \otimes \rho \big).
\end{gather*}
We denote the $l$-th symmetric power of the standard representation by $\sym^l$. The dual of a~representation is indicated by a superscript $\vee$: $\sigma^\vee$ is the dual of $\sigma$.

We let $\ovRep\big(\KgQp\big)$ be a fixed set of representatives of isomorphism classes of finite-dimensional, complex representations of $\KgQp$. To simplify notation, we let $\ovRep\big(\KgAf\big)$ be the set of restricted tensor products of the representations in $\ovRep\big(\KgQp\big)$. That is, it consists of exterior tensor products of representations of $\KgQp$ which are trivial for all but finitely many~$p$. There is a~correspondence between $\ovRep\big(\KgAf\big)$ and finite-dimensional, complex representations of $\Ga{g}$ with a congruence subgroup and the matrix $-I_{2g}$ in their kernel.

Subsets of irreducible representations are denoted by
\begin{gather*}
 \IrrRep\big(\KgRR\big)\subset \ovRep\big(\KgRR\big)\qquad\text{and}\qquad \IrrRep\big(\KgAf\big)\subset \ovRep\big(\KgAf\big).
\end{gather*}

\subsection{Covariant differential operators}

We call a differential operator~$\rmD$ covariant from $\big|_{\sigma,\rho}$ to $\big|_{\sigma',\rho}$ if for all $g \in \rmG(\RR)$ and all smooth functions $f \colon \HS^{(g)} \ra V(\sigma) \otimes V(\rho)$ , we have
\begin{gather*}
 \rmD\big( f \big|_{\sigma,\rho} g \big)= \rmD(f) \big|_{\sigma',\rho} g.
\end{gather*}
If $\rmD$ is covariant for the trivial type then it yields a differential operator that is covariant for all types.

A theorem of Helgason~\cite{helgason-1962}, allows us to classify order~$1$ differential operators. For each~$\sigma$ there is a lowering operator~$\rmL = \rmL_\sigma$ that is covariant from $\sigma$ to $\rmL \sigma = \sym^{2 \vee} \sigma$ and a raising operator~$\rmR = \rmR_\sigma$ that is covariant from~$\sigma$ to $\rmR \sigma = \sym^2 \sigma$. We only fix a normalization in the case~$g = 1$, setting
\begin{gather*}
 \rmL= -2 {\rm i} y^2 \partial_{\ov\tau} \qquad\text{and}\qquad \rmR = 2 {\rm i} \partial_\tau + k y^{-1}.
\end{gather*}
In~\cite{klemm-poretschkin-schimannek-raum-2015}, we gave explicit expressions for all~$g$. The normalization employed there, however, does not coincide with the one we choose here.

\looseness=-1 Covariant differential operators allow us to define almost holomorphic Siegel modular forms. A~function $f \colon \HS^{(g)} \ra V(\sigma) \otimes V(\rho)$ that vanishes under the $(d+1)$-th tensor power of the lowering operator, i.e., $\rmL^d f = 0$, that is invariant of weight~$\sigma$ and type~$\rho$, i.e., $f \big|_{\sigma,\rho} \gamma = f$ for all $\gamma \in \Ga{g}$, and that satisfies $f(\tau) = O(1)$ as $y \ra \infty$, if $g = 1$, is called an almost holomorphic Siegel modular form of weight~$\sigma$, type~$\rho$, and depth~$d$. The space of such functions is denoted by
\begin{gather*}
 \aMg{d} ( \sigma \otimes \rho ).
\end{gather*}

\subsection{Hecke operators}\label{ssec:modular-forms:hecke-operators}

We extend the definition of vector-valued Hecke operators in~\cite{raum-2017} for elliptic modular forms to Siegel modular forms. Many of the proofs in~op.\ cit.\ apply to the general case word by word. For this reason, we skip several arguments in this subsection.

\subsubsection*{Hecke operators on representations}\label{ssec:hecke-operators-on-representations}

For a positive integer~$M$, we let
\begin{gather*}
 \Delta_M= \Big\{
 \left(\begin{smallmatrix} a & b \\ 0 & d \end{smallmatrix}\right)
 \in \GSp{g}({\mathbb Q})\colon
 \rT a d = M I^{(g)},\,
 a, b, d \in \Mat{g}(\ZZ),\,
 d\ \text{upper triangular},\\
 \hphantom{\Delta_M= \Big\{}{}
 \forall\, i < j \colon 0 \le d_{i,j} < d_{j,j},\, \forall\, i \colon, 0 \le b_{i,j} < d_{j,j} \Big\},
\end{gather*}
where $\GSp{g}$ is the group of symplectic similitude transformations. We have a right action of~$\Sp{g}(\ZZ)$ on $\Delta_M$ defined by $(m, \gamma) \mto \ov{m \gamma}$ with $\gamma' \ov{m \gamma} = m \gamma$ for some $\gamma' \in \Sp{g}(\ZZ)$. This, in particular, defines a cocycle $I_m(\gamma) = \gamma'$; That is, we have $I_{m}(\gamma_1 \gamma_2) = I_m(\gamma_1) I_{m \gamma_1}(\gamma_2)$.

We denote the natural basis of~$\CC[\Delta_M]$ by~$\frake_m$, $m \in \Delta_M$. To every type~$\rho$ we associated the type $T_M \rho$ defined by
\begin{gather*}
 V(T_M \rho):= V(\rho) \otimes \CC[ \Delta_M ]\qquad\text{and}\qquad (T_M \rho)(\gamma) (v \otimes \frake_{m}):= \rho\big( I_m^{-1}\big(\gamma^{-1}\big) \big) (v) \otimes \frake_{m \gamma^{-1}}.
\end{gather*}
The cocycle property of $I_m(\gamma)$ implies that it is a representation of~$\Sp{g}(\ZZ)$. Given a scalar product~$\langle\,\cdot\,,\,\cdot\,\rangle_\rho$ on $V(\rho)$, we obtain one on $V(\rmT_M \rho)$ by
\begin{gather*}
\langle v \otimes \frake_{m_v}, w \otimes \frake_{m_w} \rangle= \begin{cases}
 \langle v, w \rangle_\rho, & \text{if $m_v = m_w$}, \\
 0, & \text{otherwise}.
 \end{cases}
\end{gather*}

Hecke operators on representations are compatible with homomorphisms between types and with tensor products. Specifically, the following are homomorphism of $\Sp{g}(\ZZ)$-representations:
\begin{gather}
 T_M \phi \colon \ T_M \rho \lra T_M \rho',\qquad v \otimes \frake_m \lmto \phi(v) \otimes \frake_m,\nonumber\\
\hspace*{17.5mm} \bbone \lhra T_M \bbone,\hspace*{17mm}
 c \lmto c \sum_{m \in \Delta_M} \frake_m,\nonumber\\
 (T_M \rho) \otimes (T_M \sigma)\lthra T_M (\rho \otimes \sigma),\nonumber\\
\label{eq:prop:hecke-operator-comonoidal-on-reps:comonoidal-coherence}
 (v \otimes \frake_m) \otimes (w \otimes \frake_{m'})
\lmto
 \begin{cases}
 (v \otimes w) \otimes \frake_m , & \text{if $m = m'$}, \\
 0 , & \text{otherwise},
 \end{cases}
\end{gather}
where $\phi \in \Hom( \rho, \rho' )$. If $\psi \colon \rho \ra \rho''$ is a further homomorphism, then $T_M (\psi \circ \phi) = (T_M \psi) \circ (T_M \phi)$. For later reference, we denote the morphism in~\eqref{eq:prop:hecke-operator-comonoidal-on-reps:comonoidal-coherence} by $\pi_{M, \rho, \rho'}$.

\subsubsection*{Hecke operators on modular forms}\label{ssec:hecke-operators-on-modular-forms}

For $m = \left(\begin{smallmatrix} a & b \\ 0 & d \end{smallmatrix}\right) \in \GSp{g}(\RR)$ of similitude~$M$, and for $f \colon \HS \ra V(\sigma)$, we define
\begin{gather*}
 \big( f \big|_\sigma m \big) (\tau)= \sigma\big( d \slash \sqrt{M} \big)^{-1} f\big((a\tau + b) d^{-1}\big).
\end{gather*}

Fixing a type~$\rho$ and a positive integer~$M$, we define the vector-valued Hecke operator $\rmT_M$ acting on $f \in \Mg\big( \sigma \otimes \rho \big)$ by
\begin{gather*}
 \big( T_Mf \big)(\tau)= \sum_{m \in \Delta_M} \big( f \big|_\sigma m \big)(\tau) \otimes \frake_m\in \Mg\big( \sigma \otimes \rmT_M \rho \big).
\end{gather*}
The above homomorphism $\pi_{M,\rho, \rho'}$ is compatible with Hecke operators. Specifically, we have
\begin{gather*}
 \pi_{M, \rho, \rho'} \big( (\rmT_M f) \otimes (\rmT_M g) \big)= \rmT_M \big( f \otimes g \big).
\end{gather*}

\section{Hyper-algebras}\label{sec:hyper-algebras}

We start with the formal definition of hyper-groups, which is the blueprint to our Defini\-tion~\ref{def:hyper-algebra} of hyper-algebras. Given a set~$S$, let $\mathrm{Multiset}(S) \cong \{ f \colon S \ra \ZZ_{\ge 0} \}$ be the set of all multisubsets of~$S$. Multisets are throughout denoted by double curly brackets $\{\!\{ \cdots \}\!\}$. The union of multisets corresponds to the sum of functions $S \ra \ZZ_{\ge 0}$. As a shorthand notation, for the binary operator appearing in the next definition, we set $a \ulcdot \{\!\{ b_i \}\!\}_i = \bigcup_{b_i} a \ulcdot b_i$ and similarly $\{\!\{ a_i \}\!\}_i \ulcdot b = \bigcup_{a_i} a_i \ulcdot b$.
\begin{Definition}
A pair $(G,\ulcdot)$ of a set~$G$ and a binary operator~$\ulcdot \colon G \times G \ra \mathrm{Multiset}(G)$ is called a hyper-group if the following axioms are satisfied:
\begin{enumerate}\itemsep=0pt
\item[(i)] (Finite image) For any $a, b \in G$, the multiset $a \ulcdot b$ is finite.
\item[(ii)] (Associativity) For any $a,b,c \in G$, we have $(a \ulcdot b) \ulcdot c = a \ulcdot (b \ulcdot c)$, that is,
\begin{gather*}
 \bigcup_{h \in a \ulcdot b} h \ulcdot c= \bigcup_{h \in b \ulcdot c} a \ulcdot h.
\end{gather*}
\item[(iii)] (Identity) There is an element $e \in G$ such that for every~$a \in G$ we have $a \in a \ulcdot e$ and $a \in e \ulcdot a$.
\item[(iv)] (Inverse) For every element~$a \in G$ there is element~$a^{-1} \in G$ such that $e \in a \ulcdot a^{-1}$ and $e \in a^{-1} \ulcdot a$.
\end{enumerate}
\end{Definition}

Extending this notion to hyper-algebras is straightforward. However, the reader should be warned that the word ``hyper-algebra'' is used in the context of algebraic groups~\cite{sullivan-1978}, too, and these two notions should not be confused.

Given a commutative ring~$R$ (with identity), and an $R$-module~$M$, we let $\mathrm{SubMod}_R(M)$ be the set of all $R$-submodules of~$M$. In analogy with the definition of multisets, we extend the binary operator~$\ulcdot$ in the following definition to submodules by $a \ulcdot N = \bigcup_{b \in N} a \ulcdot b$ and $N \ulcdot b = \bigcup_{a \in N} a \ulcdot b$ for $N \in \mathrm{SubMod}_R(M)$.
\begin{Definition}\label{def:hyper-algebra}
Let $R$ be a ring. A triple $(A,+,\,\ulcdot\,)$ with $(A,+)$ an $R$-module and binary operator
\begin{gather*}
 \ulcdot \colon \ A \times A \lra \mathrm{SubMod}_R(A)
\end{gather*}
is called a hyper-algebra (with identity) if
\begin{enumerate}\itemsep=0pt
\item[(i)] (Linearity) Given $a, b \in A$ and $r \in R$, we have $(r a) \ulcdot b = r \big( a \ulcdot b \big) = a \ulcdot (r b)$.

\item[(ii)] (Identity) There is $e \in A$ such that we have $e \ulcdot a = a \ulcdot e = \lspan_R a$ for all $a \in A$.

\item[(iii)] (Associativity) Given $a,b,c \in A$, we have $a \ulcdot (b \ulcdot c) = (a \ulcdot b) \ulcdot c$.

\item[(iv)] (Distributivity) for $a, b, c \in A$, we have $a \ulcdot (b + c) \subseteq a \ulcdot b + a \ulcdot c$.
\end{enumerate}
\end{Definition}

Concepts like commutativity, grading, and derivation extend to hyper-algebras. Fixing a~hyper-algebra~$A$, we call it commutative if for all $a, b \in A$, we have $a b = b a$. We say that it is graded by a hyper-group~$G$ if $A = \bigoplus_g A_g$ as an $R$-module and $A_g A_{g'} \subseteq \bigcup_{h \in g g'} A_h$. A~hyper-derivation $d$ on~$A$ is an $R$-module endomorphism such that~$d (a \ulcdot b) \subseteq (d a) \ulcdot b + a \ulcdot (d b)$.

Note that for a hyper-algebra $A$, we can define a (left) hyper-module~$M$ as an $R$-module with binary operator $\ulcdot \colon A \times M \ra \mathrm{SubMod}_R(M)$ satisfying the analogue of the axioms in Definition~\ref{def:hyper-algebra}.

\subsection{Hyper-algebras of modular forms}\label{ssec:modular-forms-hyper-algebra}

Recall the various sets of representations defined in Section~\ref{ssec:modular-forms:symplectic-group}. The hyper-algebra of holomorphic Siegel modular forms as a vector space is
\begin{gather*}
 \Mgall= \bigoplus_{\sigma, \rho} \Mg( \sigma \otimes \rho),\qquad \sigma \in \IrrRepRR,\qquad \rho \in \IrrRepAf
\end{gather*}
with product $fg = f \,\ulotimes\, g \subset \Mgall$ defined as
\begin{gather}
 \lspan \Big\{ (\phi_\sigma \otimes \phi_\rho) \circ ( f \otimes g )
\colon \sigma \in \IrrRepRR,\, \phi_\sigma \colon \sigma_f \otimes \sigma_g \ra \sigma,\nonumber\\
\hphantom{\lspan \Big\{ (\phi_\sigma \otimes \phi_\rho) \circ ( f \otimes g )
\colon }{} \rho \in \IrrRepAf,\, \phi_\rho \colon \rho_f \otimes \rho_g \ra \rho
 \Big\}.\label{eq:def:hyper-product}
\end{gather}
Note that both $\IrrRepRR$ and $\IrrRepAf$ are hyper-groups, and the hyper-algebra of Siegel modular forms is graded by both. Our notation for submodules, for example $\Mg (\bullet \otimes \rho)$, has the obvious meaning.

We will also work with the hyper-algebras of almost holomorphic Siegel modular forms
\begin{gather*}
 \aMgall= \bigcup_{0 \le d} \bigoplus_{\sigma, \rho} \aMg{d}( \sigma \otimes \rho ),\qquad \sigma \in \IrrRepRR,\qquad \rho \in \IrrRepAf
\end{gather*}
with product as in~\eqref{eq:def:hyper-product}.

\subsection{Computing the hyper-algebra product in Sage}
In the case of elliptic modular forms, all irreducible weights are $1$-dimensional. Types can, however, be arbitrary dimensional. We adopt an example from~\cite{raum-2017}, which illustrates how to compute products in $\Mgall$ of modular forms with nontrivial type. For convenience, we give Sage code~\cite{sage-6.9}\footnote{Computations make implicit use of the libraries~\cite{flint-2.5.2,pari-2.7.3} and possibly further ones that are less obvious from the source code of Sage.}, which the reader can modify to perform his or her own computation~-- the code is not optimized for performance, but for clarity.

We consider the representation~$\rho_3$, which we realize as a matrix representation by
\begin{gather*}
 \rho_3(T)= \begin{pmatrix}
 1 & 0 & 0 \\
 0 & 0 & 1 \\
 -1 & -1 & -1
 \end{pmatrix} ,\qquad
 \rho_3(S) = \begin{pmatrix}
 0 & 1 & 0 \\
 1 & 0 & 0 \\
 -1 & -1 & -1
 \end{pmatrix}.
\end{gather*}
Its kernel contains the congruence subgroup~$\Gamma(3)$. The space of Eisenstein series of weight~$12$ and type $\rho_3$ has dimension one. A basis element $E_{12,\rho_3}$ can be obtained from vector-valued Hecke operators that are discussed in Section~\ref{ssec:modular-forms:hecke-operators}. We let $\zeta$ be a third root of unity. The image of $\rmT_3$ on the level~$1$ Eisenstein series of weight~$12$ is
\lstset{language=Python, basicstyle=\small\sffamily}
\begin{lstlisting}
    K.<zeta> = CyclotomicField(3)
    R.<q3> = K[[]]

    E12 = EisensteinForms(1,12).basis()[0]
    E12T3 = lambda n: vector([
     	    3** 6 * E12.qexp(n//3+1).subs(q=q3**9).add_bigoh(3*n)
              , 3**-6 * E12.qexp(3*n).subs(q=q3)
              , 3**-6 * E12.qexp(3*n).subs(q=zeta*q3)
              , 3**-6 * E12.qexp(3*n).subs(q=zeta**2*q3)
              ])
\end{lstlisting}
Fourier expansions are computed in terms of $\text{q3} = e(\tau \slash 3)$. The components of an Eisenstein series of type~$\rho_3$ can be found by applying a homomorphism from $\rmT_3 \bbone$ to $\rho_3$, which can be computed by the method that we describe below when decomposing~$\rho_3\rho_3$,
\begin{lstlisting}
    E12rho3 = lambda n: matrix(3, [ 1,-1/3,-1/3,-1/3
                                  , -1/3,1,-1/3,-1/3
                                  , -1/3,-1/3,1,-1/3
                                  ]) * E12T3(n)
\end{lstlisting}

To compute $E_{12,\rho_3} \,\ulotimes\, E_{12,\rho_3}$, we have to decompose the $9$-dimensional representation $\rho_3 \otimes \rho_3$. Since the kernel of $\rho_3$ has finite index in~$\SL{2}(\ZZ)$, character theory for finite groups is one way to achieve this. We use a more general method that applies to all representation: By exhibiting the trivial representation in $( \rho_3 \otimes \rho_3 )^\vee \otimes \rho$ for various representations $\rho$, we compute the space of homomorphisms $\Hom(\rho_3 \otimes \rho_3, \rho)$. For a systematic decomposition of $\rho_3 \otimes \rho_3$, one could apply the MeatAxe algorithm~\cite{holt-eick-obrien-2005} in conjunction with a multimodular approach.

We start by defining the representation matrices of the trivial representation~$\bbone$, $\rho_3$, and $\rho_3 \otimes \rho_3$,
\begin{lstlisting}
    s_triv = t_triv = identity_matrix(QQ, 1)
    s3 = matrix(QQ, 3, [0,1,0, 1,0,0, -1,-1,-1])
    t3 = matrix(QQ, 3, [1,0,0, 0,0,1, -1,-1,-1])
    s33 = s3.tensor_product(s3)
    t33 = t3.tensor_product(t3)
\end{lstlisting}
We compute homomorphisms between representations by employing the isomorphism of vector spaces $\Hom(\rho,\rho') \cong \big(\rho^\vee \otimes \rho'\big) (\bbone)$. Representation matrices of the dual representation $\rho^\vee$ are given by the transpose inverses of those of~$\rho$. We can immediately compute homomorphisms from~$\rho_3\rho_3$ to~$\bbone$ and $\rho_3$,
\begin{lstlisting}
    dual = lambda m: m.transpose().inverse()
    hom = lambda s1,t1,s2,t2: \
            (dual(s1).tensor_product(s2)-1).right_kernel() \
            .intersection((dual(t1).tensor_product(t2)-1).right_kernel())

    hom_triv = hom(s33,t33, s_triv,t_triv)
    hom_rho3 = hom(s33,t33, s3,t3)
\end{lstlisting}
There is one copy of $\bbone$ and two copies of $\rho_3$ in $\rho_3\rho_3$. We are facing the problem of decomposing their complement. In our case, it turns out that it consists of two inequivalent one-dimensional representations with representation matrices
\begin{gather*}
 \rho_\zeta(S)= \begin{pmatrix}
 1
 \end{pmatrix},\qquad
 \rho_\zeta(T) = \begin{pmatrix}
 \zeta
 \end{pmatrix}
\qquad\text{and}\qquad
 \rho_{\zeta^2}(S) = \begin{pmatrix}
 1
 \end{pmatrix},\qquad
 \rho_{\zeta^2}(T)= \begin{pmatrix}
 \zeta^2
 \end{pmatrix}.
\end{gather*}
In Sage, we implement them by means of
\begin{lstlisting}
    K.<zeta> = CyclotomicField(3)

    szeta = identity_matrix(K,1)
    tzeta = matrix(K,1,[zeta])

    szeta2 = identity_matrix(K,1)
    tzeta2 = matrix(K,1,[zeta**2])
\end{lstlisting}

First, we determine the kernel of the homomorphism that we have determined so far
\begin{lstlisting}
    bm = reduce( lambda s,l: s.intersection(matrix([l]).right_kernel())
               ,   [ hom_triv.basis()[0] ]
                 + [b[ix::3] for b in hom_rho3.basis() for ix in range(3)]
               , VectorSpace(QQ,9) ) \
         .basis_matrix().transpose()
    srest = bm.solve_right(s33*bm)
    trest = bm.solve_right(t33*bm)
\end{lstlisting}
Second, we observe that $S$ acts trivially on that kernel, so that it suffices to decompose the action of $T$ into eigenspaces. In the present case they are defined over a third order cyclotomic extension of the rationals. In fact, they are isomorphic to $\rho_\zeta$ and $\rho_{\zeta^2}$ given above
\begin{lstlisting}
    hom_zeta = hom(s33,t33, szeta,tzeta)
    hom_zeta2 = hom(s33,t33, szeta2,tzeta2)
\end{lstlisting}

We construct matrices from the homomorphism spaces that we previously determined
\begin{lstlisting}
    phi_zeta = matrix(hom_zeta.basis()[0])
    phi_zeta2 = matrix(hom_zeta2.basis()[0])
    phi_rho3_1 = matrix([hom_rho3.basis()[0][ix::3] for ix in range(3)])
    phi_rho3_2 = matrix([hom_rho3.basis()[1][ix::3] for ix in range(3)])
\end{lstlisting}
Assembling the results that we have computed via Sage, we find that with respect to the given bases, we have homomorphisms
\begin{alignat*}{4}
& \phi_{\bbone}\colon \ && \rho_3 \otimes \rho_3 \lra \bbone,\qquad && \begin{pmatrix}
 1 & \frac{1}{2} & \frac{1}{2} & \frac{1}{2} & 1 & \frac{1}{2} & \frac{1}{2} & \frac{1}{2} & 1
 \end{pmatrix},& \\
 & \phi_{\zeta}\colon \ && \rho_3 \otimes \rho_3 \lra \rho_{\zeta},\qquad &&
 \begin{pmatrix}
 1 & \zeta + 1 & -\zeta & \zeta + 1 & \zeta & -1 & -\zeta & -1 & -\zeta-1
 \end{pmatrix},&\\
& \phi_{\zeta^2}\colon \ && \rho_3 \otimes \rho_3 \lra \rho_{\zeta^2} ,\qquad && \begin{pmatrix}
 1 & -\zeta & \zeta + 1 & -\zeta & -\zeta - 1 & -1 & \zeta + 1 & -1 & \zeta
 \end{pmatrix},& \\
& \phi_{\rho_3,1}\colon \ && \rho_3 \otimes \rho_3 \lra \rho_3 ,\qquad && \begin{pmatrix}
 1 & 0 & -1 & -1 & -1 & -2 & 0 & 1 & -1 \\
 -1 & 0 & 1 & -1 & 1 & 0 & -2 & -1 & -1 \\
 -1 & -2 & -1 & 1 & -1 & 0 & 0 & -1 & 1
 \end{pmatrix},& \\
 & \phi_{\rho_3,2}\colon \ && \rho_3 \otimes \rho_3 \lra \rho_3,\qquad &&
 \begin{pmatrix}
 0 & 1 & -1 & -1 & 0 & -3 & 1 & 3 & 0 \\
 0 & 1 & 3 & -1 & 0 & 1 & -3 & -1 & 0 \\
 0 & -3 & -1 & 3 & 0 & 1 & 1 & -1 & 0
 \end{pmatrix}.&
\end{alignat*}

Summarizing, we find that $E_{12, \rho_3} \,\ulotimes\, E_{12, \rho_3}$ is supported on $\bbone$, $\rho_\zeta$, $\rho_{\zeta^2}$, and $\rho_3$. The correspon\-ding subspaces are spanned by elements whose Fourier coefficients are too large to display them all. We confine ourselves to the trivial type:
\begin{gather*}
 \frac{564856947200}{1594323} - \frac{1894333004462080000}{84584326707}q - \frac{1261863434802833408000}{28194775569}q^2 + O\big(q^3\big)\\
 \qquad{} \in E_{12, \rho_3} \,\ulotimes\, E_{12, \rho_3} \cap \rmM\big( 24 \otimes \bbone \big).
\end{gather*}
Beyond this, we illustrate how to compute the remaining Fourier expansions with Sage. The tensor square of~$E_{12, \rho_3}$ with precision at least~$n$ is given by the following function~$\text{Esq}(n)$
\begin{lstlisting}
    Esq = lambda n: vector(R,[c1*c2 for c1 in E12rho3(n)
                                    for c2 in E12rho3(n)])

    print phi_triv * Esq(3)
    print phi_zeta * Esq(3)
    print phi_zeta2 * Esq(3)
    print (phi_rho3_1 * Esq(3), phi_rho3_2 * Esq(3))
\end{lstlisting}

\subsection{From weights to isomorphism classes of irreducible weights}

We have defined weights as complex, finite-dimensional representations of $\KgRR$ without any further restriction. If we wished to construct a graded algebra of modular forms whose grading includes all these weights, this would not be possible. All representations of~$\KgRR$ together do not constitute a set, but rather they are objects in a category $\Rep\big(\KgRR\big)$. Given this fact, one might be inclined to pass from weights to isomorphism classes of weights~-- the skeleton of $\Rep\big(\KgRR\big)$. Alternatively, we can and will focus on a fixed set~$\ovRep\big(\KgRR\big)$ of representatives of isomorphism class. For every $\sigma \in \ovRep\big(\KgRR\big)$ the space of modular forms~$\Mg(\sigma)$ of weight~$\sigma$ is a vector space.

Let us consider to what extent we can define a multiplication. Given $f \in \Mg( \sigma_f )$ and $g \in \Mg( \sigma_g )$ with $\sigma_f, \sigma_g \in \ovRep\big(\KgRR\big)$, we find that
\begin{gather*}
 fg = f \otimes g \in \Mg ( \sigma_f \otimes \sigma_g )
\end{gather*}
for their tensor product. Its weight, in general, is not in $\ovRep\big(\KgRR\big)$. Denote by $\ov{\sigma_f \sigma_g}$ the corresponding representative. A priori, $fg$ lies in $\rmC^\infty( \HSg \ra V(\sigma_f\sigma_g) )$. We have to obtain from it an element in $\rmC^\infty\big( \HSg \ra V(\ov{\sigma_f\sigma_g}) \big)$. The most naive way is to choose an endomorphism $\phi$ in $\Hom( \sigma_f\sigma_g, \ov{\sigma_f\sigma_g})$ and compose it with $fg$. The resulting function $\phi \circ fg$ depends on $\phi$. More precisely, it depends on $f$ and $g$ up to automorphisms of $\ov{\sigma_f\sigma_g}$.

Except in the case $g=1$, there seems to be no natural choice of endomorphisms for all tensor products. For instance, $\sym^{l}\sym^{l'}$ contains $\sym^{l+l'}$ in a natural way by realizing $\sym^l$ and $\sym^{l'}$ as a representation on polynomials of degree~$l$ and $l'$ in $g$ variables. But, for example if $g = 2$, there is no natural inclusion of $\sym^{l+l'-2}$ into $\sym^{l}\sym^{l'}$ for $l,l' \ge 2$. The situation becomes even more difficult for $g \ge 3$, because then irreducible representations can occur with multiplicities greater than~$1$ in tensor products of irreducible representations.

Since there is no natural choice of a single homomorphism from $\sigma_f\sigma_g$ to $\ov{\sigma_f\sigma_g}$, we resort to taking the span over all of them. The product of $f$ and $g$ in this setting is defined as the space
\begin{gather*}
 \lspan \big\{ \phi \circ fg \colon \phi \in \Hom\big( \sigma_f\sigma_g,\, \ov{\sigma_f\sigma_g} \big)
 \big\} \subset \bigoplus_{\sigma \in \ovRep\big(\KgRR\big)} \Mg( \sigma).
\end{gather*}
This product yields a hyper-algebra of modular forms whose weights run through all isomorphism classes of representations. Since at this point, we have already lost the algebra structure and arrived at hyper-algebras, there is no additional harm in considering only irreducible weights. This motivates our definition in Section~\ref{ssec:modular-forms-hyper-algebra}.

\subsection{Covariant differential operators on hyper-algebras}\label{ssec:hyper-algebras:differential-operators}

As in the case of tensor products the image of lowering and raising operators has, in general, reducible weight. For example, in the case $g=2$ and if $l \ge 2$, the weight~$\rmR \det^k\sym^l = \det^k\sym^2\sym^l$ allows for a decomposition into irreducible weights $\det^{k-2}\sym^{l-2}$, $\det^{k-1}\sym^l$, and $\det^k\sym^{l+2}$. It is therefore natural to define a vector space valued action of covariant differential operators.

Let $\CC [ \rmL, \rmR ]$ be the polynomial algebra in two formal variables $\rmL$ and $\rmR$. Notation overlaps with the one for lowering and raising operators, but it will be clear from the context to what we refer when writing $\rmL f$ or $\rmR f$. The action of $\rmL$ and $\rmR$ on $\aMgall$ is given by
\begin{gather*}
 \rmL f = \lspan \big\{
 \phi \circ \rmL_{\sigma_f} f \colon \sigma \in \IrrRepRR,\, \phi \colon \sigma \ra \rmL \sigma_f \big\}\qquad\text{and}\\
 \rmR f = \lspan \big\{
 \phi \circ \rmR_{\sigma_f} f \colon \sigma \in \IrrRepRR,\, \phi \colon \sigma \ra \rmR \sigma_f \big\}.
\end{gather*}

Viewing the differential operators $\rmL$ and $\rmR$ jointly as a vector-valued Hecke operator at the infinite place acting on almost holomorphic Siegel modular forms, we set for $f \in \aMgall$
\begin{gather*}
 \bbT_\infty= \CC [ \rmT_\infty ],\qquad \rmT_\infty f = \rmL f + \rmR f \subset \aMgall.
\end{gather*}
One readily verifies by employing the defining formulas of lowering and raising operators in~\cite{klemm-poretschkin-schimannek-raum-2015} that $\rmT_\infty$ acts as a hyper-derivation.

\subsection{Hecke actions on hyper-algebras}\label{ssec:hyper-algebras:hecke-operators}

Based on the vector-valued Hecke operators that we have introduced in Section~\ref{ssec:modular-forms:hecke-operators}, we get an additional hyper-module structure on (almost) holomorphic Siegel modular forms. Let
\begin{gather*}
 \bbT_\rmf= \CC [ \rmT_p \colon p \ \text{prime}]
\end{gather*}
be the polynomial ring in infinitely many formal variables $\rmT_p$. As in the case of covariant differential operators, notation coincides with the one for actual Hecke operators, but this should not lead to confusion. The action of $\rmT_p$ on $\aMgall$ is defined by
\begin{gather*}
 \rmT_p f = \lspan \big\{ \phi \circ \rmT_p f \colon \rho \in \IrrRepAf,\, \phi \colon \rmT_p \rho_f \ra \rho \big\} \subset \aMgall.
\end{gather*}
This equips $\aMgall$ with the structure of a $\bbT_\rmf$-hyper-module.

\subsection{The formal Hecke algebra}\label{ssec:hyper-algebras:hecke-algebra}

We combine both formal Hecke algebras into one
\begin{gather*}
 \bbT= \bbT_{\infty}\bbT_{\Af} = \CC [ \rmT_\infty, \rmT_p \colon p\ \text{prime}].
\end{gather*}
Recall that $\bbT_\infty \subset \bbT$ acts on almost holomorphic Siegel modular forms by hyper-derivations, and~$\bbT_\rmf$ acts by endomorphisms.

\section{Essential surjectivity of tensor products of modular forms}\label{sec:structure-results}

The hyper-algebra structure and the action of the formal Hecke algebra make it rather easy to formulate essential surjectivity for products of modular forms. The blueprint for such results are the ones in~\cite{klemm-poretschkin-schimannek-raum-2015, raum-2017}. The first of them, phrased in the language that we have developed reads as follows. For all $k \ge 8$, all $l,k-l \ge 4$, and all congruence representations~$\rho$, we have
\begin{gather}\label{eq:surjectivity:genus-1-finite-places}
 \rmM^{(1)}( k \otimes \bullet )= \rmE^{(1)}( k \otimes \bullet)+ \bbT_\rmf E^{(1)}(l) \,\ulotimes\, \bbT_\rmf E^{(1)}(k-l).
\end{gather}
It is now straightforward to ask for analogues.

\subsection{A conjecture in the case of genus~2}\label{ssec:structure-results:genus-2}

We start with a conjecture, that we will not prove in this paper. There is experimental evidence in~\cite{raum-2010a} that
\begin{gather}
 \rmM^{(2)}( k \otimes \bbone)= \rmS\rmK^{(2)} ( k \otimes \bbone )+ \sum_{4 \le l \le k - 4}
 \rmS\rmK^{(2)} ( l \otimes \bbone )\cdot \rmS\rmK^{(2)} ( k-l \otimes \bbone),
\end{gather}
where $\rmS\rmK$ denotes the space of (holomorphic) Saito--Kurokawa lifts. In light of this and of formula~\eqref{eq:surjectivity:genus-1-finite-places} an extension to all~$\rho$ seems possible. Could it be that for sufficiently large~$k$,~$l$, and~$k-l$, and for any congruence type~$\rho$ of genus~$2$ Siegel modular forms, we have
\begin{gather}\label{eq:structure-results:genus-2}
 \rmM^{(2)} ( k \otimes \rho )= \rmS\rmK^{(2)} ( k \otimes \rho )+ \bbT_\rmf \rmS\rmK^{(2)} ( l \otimes \bbone )\, \ulotimes\,
 \bbT_\rmf \rmS\rmK^{(2)} ( k-l \otimes\bbone )\quad\text{?}
\end{gather}
Note that we have restricted to scalar weights in the above. To cover vector-valued weights, we have to first study the effect of covariant differential operators on tensor products.

On inspection of the proofs in~\cite{raum-2017}, we observe that applying vector-valued Hecke operators on the right-hand side of~\eqref{eq:structure-results:genus-2} reduces the proof to a certain unfolding of Petersson scalar products and a nonvanishing of twisted $\rmL$-values. In the case of Siegel modular forms, unfolding to twisted spinor $\rmL$-series was studied in~\cite{krieg-raum-2009} following older ideas from~\cite{kohnen-skoruppa-1989}. A sufficient nonvanishing result for the purpose of~\cite{raum-2017} follows from Waldspurger's treatment of half-integral weight modular forms~\cite{kohnen-zagier-1984, waldspurger-1981}. Since the untwisted analogue in the case of genus~$2$ Siegel modular forms has now been established~\cite{furusawa-morimoto-2017}, there is hope to prove the following, stronger statement:
\begin{gather*}
 \rmM^{(2)} ( k \otimes \rho )= \rmS\rmK^{(2)} ( k \otimes \rho )+
 \bbT_\rmf \rmE^{(2)} ( l \otimes \bbone ) \,\ulotimes\, \bbT_\rmf \rmS\rmK^{(2)}( k-l \otimes \bbone)\quad\text{?}
\end{gather*}

\subsection{Hecke operators at all places in genus~1}

Making use of not only Hecke operators at the finite places, but of the full formal Hecke algebra~$\bbT$, one can strengthen surjectivity results such as the one in~\eqref{eq:surjectivity:genus-1-finite-places}.
\begin{Theorem}\label{thm:products-for-differential-operators}Let $l,l' \ge 4$ be even integers, and $\rho$ a congruence type. Then for every even $k \ge l + l'$ we have
\begin{gather}\label{eq:surjectivity:genus-1-infinite-place}
 \rmS^{(1)} ( k \otimes \rho ) \subset \bbT_\infty \big( \bbT E^{(1)}(l) \,\ulotimes\, \bbT E^{(1)}(l') \big).
\end{gather}
\end{Theorem}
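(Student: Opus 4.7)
The plan is to reduce to the main theorem of \cite{raum-2017}, which handles the base case $k=l+l'$ and is recorded above as~\eqref{eq:surjectivity:genus-1-finite-places}, and to use the raising operator inside $\bbT_\infty$ to bridge the weight gap to larger $k$. As a preliminary observation, since $\rmT_\infty$ acts as a hyper-derivation on $\aMgall$, the Leibniz rule gives
\begin{gather*}
 \bbT_\infty\big(\bbT E^{(1)}(l)\,\ulotimes\,\bbT E^{(1)}(l')\big)\subseteq \bbT E^{(1)}(l)\,\ulotimes\,\bbT E^{(1)}(l'),
\end{gather*}
so \eqref{eq:surjectivity:genus-1-infinite-place} is equivalent to the inclusion $\rmS^{(1)}(k\otimes\rho)\subset \bbT E^{(1)}(l)\,\ulotimes\,\bbT E^{(1)}(l')$. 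The base case $k=l+l'$ is then the cuspidal component of~\eqref{eq:surjectivity:genus-1-finite-places} with the second parameter specialized to $l'=k-l$.

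For $k>l+l'$, write $k=l+l'+2n$ with $n\ge 1$ and introduce the $n$-th Rankin--Cohen bracket $[\,\cdot\,,\,\cdot\,]_n$. This bilinear differential operator sends a pair of holomorphic modular forms of weights $l$ and $l'$ to a holomorphic cusp form of weight~$k$, and can be written as an explicit polynomial $[f,g]_n=\sum_{i=0}^{n}c_i\,(\rmR^i f)\otimes(\rmR^{n-i} g)$ in the raising operators, with constants $c_i$ chosen so that the resulting almost holomorphic combination has depth zero. Hence $[f,g]_n\in \bbT_\infty f\,\ulotimes\,\bbT_\infty g$, and taking $f=\rmT_M E^{(1)}(l)$ and $g=\rmT_{M'} E^{(1)}(l')$ for positive integers $M,M'$, the bracket $[f,g]_n$ lies in $\bbT E^{(1)}(l)\,\ulotimes\,\bbT E^{(1)}(l')$. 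The central step is then to show that these brackets, as $M$, $M'$, and the type-projection homomorphisms vary, span all of $\rmS^{(1)}(k\otimes\rho)$. Following \cite{raum-2017}, I would argue via Petersson duality: assume a nonzero $f\in \rmS^{(1)}(k\otimes\rho)$ is orthogonal to every such bracket, apply the Rankin--Selberg unfolding to the pairing $\langle f, [\rmT_M E^{(1)}(l),\rmT_{M'}E^{(1)}(l')]_n\rangle$ to express it in terms of twisted $L$-values of $f$, and invoke Waldspurger's nonvanishing theorem to force $f=0$.

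The main obstacle will be the explicit Rankin--Selberg unfolding in this vector-valued, higher-derivative setting. The hyper-algebra framework of Section~\ref{sec:hyper-algebras} is designed to manage the types produced by Hecke operators and by the decomposition of tensor products, but the order-$n$ Rankin--Cohen bracket introduces additional weight-dependent coefficients that must be tracked through the unfolding in order to confirm that a sufficiently rich family of twisted $L$-values appears for Waldspurger's nonvanishing result to apply.
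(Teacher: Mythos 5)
Your strategy is sound and its analytic core---Petersson duality, Rankin--Selberg unfolding to a Dirichlet series that factors into twisted $L$-values, and a Waldspurger-type input---is exactly the paper's; what differs genuinely is how the raising operators are packaged, and the comparison is instructive. The paper never forms Rankin--Cohen brackets: it applies $\rmR^t$ to \emph{one} factor only, namely a real-analytic vector-valued Eisenstein series $E_{l',\bbone_{|D|},\infty,s}$ carrying a spectral parameter~$s$, and pairs the twisted newform $f_{\epsilon_D}$ against $\Ind\big(E_{l,\bbone_{|D|}}\big)\otimes\rmR^t\big(E_{l',\bbone_{|D|},\infty,s}\big)$. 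That same real-analytic series is the unfolding vehicle, the factor $y^s$ regularizes the unfolding, and $s=0$ is inserted only after analytic continuation, via $\lim_{s\to0}\rmR^t y^s=(l')^\uparrow_t y^{-t}$. This treats every $t\ge0$ uniformly, so the paper needs no separate base case $k=l+l'$ and no appeal to~\eqref{eq:surjectivity:genus-1-finite-places}, and it sidesteps justifying term-by-term unfolding against derivatives of holomorphic Eisenstein series. Your bracket formulation buys a cleaner duality step: for $n\ge1$ the brackets are honest cusp forms, so ``orthogonal to all brackets implies zero'' immediately yields spanning inside $\rmS^{(1)}(k\otimes\rho)$; moreover, by Shimura's orthogonality relations~\cite{shimura-1987} a holomorphic cusp form only sees the holomorphic projection, so your pairings agree with the paper's up to explicit nonzero constants and the two computations are interchangeable. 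Four points should be made explicit in a write-up. (i) For the base case, cite Theorem~\ref{thm:main-theorem:classical} (the cuspidal inclusion, i.e., the main result of~\cite{raum-2017}) rather than~\eqref{eq:surjectivity:genus-1-finite-places}: extracting $\rmS\subset X$ from $\rmM=\rmE+X$ requires an additional Hecke-stability splitting argument. (ii) Reduce to scalar multiples of newforms \emph{before} unfolding, using that your span of brackets (as $M$, $M'$ and the type projections vary) is a $\bbT_\rmf$-module; for a general $f$ the unfolded Dirichlet series does not factor into $L$-functions. (iii) The discriminant twists are the crux hiding behind ``as $M$, $M'$ vary'': one needs the full family $E_{l,\bbone_{|D|}}$, $f_{\epsilon_D}$ over negative fundamental discriminants~$D$, extracted from vector-valued Hecke operators and type projections as in~\cite{raum-2017}. (iv) Waldspurger is needed only in the boundary case $l=l'$: otherwise both $L$-factors produced by the unfolding lie in the region of absolute convergence and are automatically nonzero, whereas at the center the hypothesis forces $L\big(f_{\epsilon_D},\frac{k}{2}\big)=0$ for all~$D$, which kills the Shintani lift of~$f$; it is the injectivity of that lift (Kohnen--Zagier, Waldspurger)---not a pointwise nonvanishing theorem---that gives $f=0$.
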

\begin{proof}To ease notation, we suppress the superscript $(1)$ for Eisenstein series throughout this proof. The notation in this proof is adopted from~\cite{raum-2017}: We let $\bbone_{|D|}$ be the square of the Kronecker character~$\epsilon_D$ for a negative fundamental discriminant~$D$. Note hat $\bbone_{|D|}$ is a trivial non-primitive Dirichlet character. The modular form $f_{\epsilon_D}$ is the twist of $f$ by $\epsilon_D$. The induction of modular forms for $\Gamma_0(N)$ to vector-valued modular forms for $\SL{2}(\ZZ)$ is denoted by~$\Ind$. The induction of a Dirichlet character~$\chi$ of modulus~$N$ from $\Gamma_0(N)$ to~$\SL{2}(\ZZ)$ is denoted by~$\rho_\chi$.

Without loss of generality, we can assume that $l \le l'$. In complete analogy with~\cite{raum-2017}, we note that the right-hand side of~\eqref{eq:surjectivity:genus-1-infinite-place} is a $\bbT_{\rmf}$-module. It therefore suffices to show the following: Any scalar multiple of a newform~$f$ for the congruence subgroup $\Gamma_0(N)$ vanishes, if for $t = (l + l' - k) \slash 2 \in \ZZ_{\ge 0}$ and all~$D$, the (holomorphic) modular form $f_{\epsilon_D}$ is orthogonal to the almost holomorphic modular form
\begin{gather*}
 \Ind \big( E_{l,\bbone_{|D|}} \big) \otimes \rmR^t \big( E_{l',\bbone_{|D|},\infty} \big).
\end{gather*}

We define the following two vector-valued Eisenstein series:
\begin{gather*}
 E_{l,\bbone_{|D|}} (\tau)= \sum_{n = 1}^\infty \sigma_{l-1, \bbone_{|D|}}(n) q^n,\\
 E_{l,\bbone_{|D|},\infty,s}= \sum_{\gamma \in \Gamma_\infty \backslash \SL{2}(\ZZ)} \frake_{\Gamma_0(N)} \otimes \frake_{\Gamma_0(N)} y^s
 \Big|_{k,\rho_{\bbone_{|D|}} \otimes \rho_{\bbone_{|D|}}} \gamma.
\end{gather*}
Real-analytic vector-valued Eisenstein series have, for example, appeared in~\cite{taylor-2006}.

We establish the described vanishing condition by relating the Petersson scalar product to special $L$-values, as in~\cite{imamoglu-kohnen-2005,kohnen-zagier-1984,raum-2017}. Combining regularization and unfolding as is described in detail in~\cite{raum-2017}, we find that
\begin{gather}
 \big\langle \Ind f_{\epsilon_D}, \Ind \big( E_{l,\bbone_{|D|}} \big) \otimes \rmR^t \big( E_{l',\bbone_{|D|},\infty,s} \big) \big\rangle\nonumber\\
\qquad{} = \int_{\Ga{1} \backslash \HS^{(1)}} \pi \big( f \cdot E_{l,\bbone_{|D|}} (\tau) \cdot \frake_{\Gamma_0(N)} \otimes \frake_{\Gamma_0(N)} \rmR^t y^s \big) \frac{{\rm d} x {\rm d} y}{y^{2-k}},\label{eq:surjectivity:genus-1-infinite-place:scalar-product}
\end{gather}
where $\pi$ is the projection adjoint to the inclusion
\begin{gather*}
 \bbone \lra \rho_{|D|} \otimes \big( \rho_{|D|} \otimes \rho_{N|D|^2} \big) \otimes \rho_{N |D|^2},\qquad
 1 \lmto \sum_{\substack{\gamma \colon \Gamma_0(|D|) \backslash \SL{2}(\ZZ) \\
 \gamma' \colon \Gamma_0(|D| N) \backslash \SL{2}(\ZZ)}}
 \frake_\gamma \otimes \big( \frake_\gamma \otimes \frake_{\gamma'} \big) \otimes \frake_{\gamma'}.
\end{gather*}
We will next evaluate the integral, employ analytic continuation, and then insert $s = 0$, to obtain a product of special values of Dirichlet series.

A direct verification shows that $\lim\limits_{s \ra 0} \rmR^t_{l'} y^{s} = (l')^\uparrow_t y^{-t}$, where $(a)^\uparrow_n = a (a+1) \cdots (a+n-1)$ is the upper factorial. The above scalar product~\eqref{eq:surjectivity:genus-1-infinite-place:scalar-product} therefore equals
\begin{gather*}
 (l')^\uparrow_t \left( \sum_{n=0}^\infty \int_0^\infty c(f_{\epsilon_D};n) e(n {\rm i} y) \sigma_{l-1,\bbone_{|D|}}(n) e(n {\rm i} y)
 y^{-t+s} \frac{{\rm d} y}{y^{2-k}} \right)_{s = 0}\\
\qquad{} = (l')^\uparrow_t \left( \frac{\Gamma(k-1-t)}{(4 \pi)^{k-1-t}} \sum_{n=0}^\infty \frac{\sigma_l(n) c(f_{\epsilon_{D}};n)}
 {n^{k-1-t+s}} \right)_{s = 0}.
\end{gather*}
By the extension of Rankin's result~\cite{rankin-1952} in~\cite{raum-2017}, we find that it does converge absolutely, if $l > l'$ or allows for a suitable analytic continuation of $l = l'$. It can be expressed in terms of special $L$-values as follows
\begin{gather*}
 \frac{L\big( f_{\epsilon_D}, k-1-t \big) L\big( f_{\epsilon_{D}} \times \bbone_{|D|}, k-l-t \big)}
 {L\big( \bbone_{|D|} \bbone_{|D|} \bbone, k - 1 - 2t - l)\big)}.
\end{gather*}
The denominator equals $L\big( \bbone_{|D|}, l' - 1)\big)$ by the relation $k = 2t + l + l'$. The first factor in the numerator is a special value of an Euler product, since $t = ( k - l - l' ) \slash 2 < \frac{k}{2} - 2$ and thus $k - 1 - t > \frac{k}{2} + 1$. In order to inspect the second factor in the numerator, note that $l + t \le \frac{k}{2}$, since we have assumed that $l \le l'$. If $l + t < \frac{k}{2}$ then $k - t - l \ge \frac{k}{2} + 1$, so that $L(f_{\epsilon_D}, k - t - l)$ is the special value of a convergent Euler product. Otherwise, we infer that the central value $L\big(f_{\epsilon_D}, \frac{k}{2}\big)$ vanishes for all negative fundamental discriminants~$D$. Using Waldspurger's and Kohnen--Zagier's results~\cite{kohnen-zagier-1984, waldspurger-1981}, we can argue as in~\cite{raum-2017} to finish the proof. More precisely, the vanishing of~$L\big(f_{\epsilon_D}, \frac{k}{2}\big)$ implies the vanishing of the Shintani lift of~$f$, because $f$ is a scalar multiple of a newform. Since the Shintani lift is injective, we find that~$f = 0$ as desired.
\end{proof}

Theorem~\ref{thm:products-for-differential-operators} does not cover the case of small weights~$k$. The next statement clarifies that $k < l + l'$ cannot appear on the right-hand side of~\eqref{eq:surjectivity:genus-1-infinite-place}.
\begin{Proposition}\label{prop:surjectivity-fails-small-weight}
Suppose that $k < l_1 + l_2$ for positive even $l_1,l_2 \ge 4$ and positive even~$k$. Then any weight~$k$ cusp form~$f$ is orthogonal with respect to the Petersson scalar product to $\rmR^{t_1} g_1 \cdot \rmR^{t_2} g_2$ for $g_1$ and $g_2$ modular forms of weight~$l_1$ and $l_2$, and $t_1,t_2 \in \ZZ_{\ge 0}$ such that $k = l_1 + 2 t_1 + l_2 + 2 t_2$.
\end{Proposition}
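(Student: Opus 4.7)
The plan is to verify the proposition by a straightforward weight count. First, I recall from Section~\ref{sec:modular-forms} that in genus~$1$, the raising operator $\rmR = 2\rm{i}\partial_\tau + k y^{-1}$ shifts weight upwards by $2$, so that $\rmR^{t_i} g_i$ is an almost holomorphic modular form of weight $l_i + 2 t_i$. An application of the Leibniz rule (which one checks directly from the explicit formula for $\rmR$) shows that the pointwise product $\rmR^{t_1} g_1 \cdot \rmR^{t_2} g_2$ transforms with weight $l_1 + l_2 + 2 t_1 + 2 t_2$.

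Second, for the Petersson scalar product $\langle f, \rmR^{t_1} g_1 \cdot \rmR^{t_2} g_2 \rangle$ to be defined, both factors must transform with the same weight, which is why the hypothesis includes the equality $k = l_1 + 2 t_1 + l_2 + 2 t_2$. Combined with $t_1, t_2 \in \ZZ_{\ge 0}$, this forces $k \ge l_1 + l_2$, directly contradicting the standing assumption $k < l_1 + l_2$. Hence no admissible tuple $(g_1, g_2, t_1, t_2)$ exists, and the universally quantified orthogonality statement is vacuously true.

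I expect no substantive obstacle; the proposition is a sharpness companion to Theorem~\ref{thm:products-for-differential-operators}, making precise that the weight bound $k \ge l + l'$ in that theorem cannot be relaxed as long as one modifies the two factors only by iterated raising operators. The essential content is that raising operators alone cannot decrease the total weight of a tensor product below $l_1 + l_2$, so weight-$k$ cusp forms with $k < l_1 + l_2$ are automatically invisible from the right-hand side of \eqref{eq:surjectivity:genus-1-infinite-place}.
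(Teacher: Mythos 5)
Your argument is formally valid for the proposition exactly as printed: the conditions $t_1,t_2\in\ZZ_{\ge 0}$ and $k=l_1+2t_1+l_2+2t_2$ do force $k\ge l_1+l_2$, so under the hypothesis $k<l_1+l_2$ the set of admissible tuples $(g_1,g_2,t_1,t_2)$ is empty and the orthogonality claim is vacuous. But this is genuinely not the paper's argument, and the difference is not cosmetic. The paper's proof treats $\rmR^{t_1}g_1\cdot\rmR^{t_2}g_2$ as a nonzero almost holomorphic form of depth $t_1+t_2$, decomposes it as $\sum_{t=0}^{t_1+t_2}\rmR^t h_t$ with $h_t$ holomorphic, and invokes Shimura's orthogonality relations; the role of the hypothesis $k<l_1+l_2$ is that the exponent $(l_1+l_2+2t_1+2t_2-k)/2$ needed to reach weight~$k$ strictly exceeds the depth $t_1+t_2$, equivalently every holomorphic constituent $h_t$ has weight at least $l_1+l_2>k$, so a weight-$k$ cusp form cannot pair nontrivially with any term. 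This shows the intended statement is non-vacuous: the weight of the product is meant to be allowed to exceed~$k$, and the printed equality $k=l_1+2t_1+l_2+2t_2$ is evidently a typo (it is also incompatible with the inequality ``$d>t_1+t_2$'' appearing, with $d$ undefined, in the paper's own proof).

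The distinction matters for what the proposition is supposed to accomplish, namely explaining why weights $k<l+l'$ cannot appear on the right-hand side of~\eqref{eq:surjectivity:genus-1-infinite-place}. Since $\bbT_\infty$ contains lowering operators, that module does contain elements of weight lower than $l_1+l_2+2t_1+2t_2$; the substantive obstruction -- captured by the paper's depth argument and not by vacuity -- is that $\rmL$ annihilates $\rmR^{t_1}g_1\cdot\rmR^{t_2}g_2$ before its weight can drop below $l_1+l_2$, precisely because the depth is only $t_1+t_2$. So while your proof is logically correct for the statement as written, it proves an empty assertion and cannot support the conclusion the proposition is meant to justify. The stronger response is to flag the inconsistency among the hypotheses as an erratum, restate the proposition with the product weight $l_1+2t_1+l_2+2t_2$ allowed to exceed $k$ (orthogonality being tested against the holomorphic constituents), and then give the depth-plus-Shimura argument.
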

\begin{proof}The almost holomorphic modular form $\rmR^{t_1} g_1 \cdot \rmR^{t_2} g_2$ has depth $t_1 + t_2$. That is, it allows for a decomposition
\begin{gather*}
 \rmR^{t_1} g_1 \cdot \rmR^{t_2} g_2= \sum_{t = 0}^{t_1 + t_2} \rmR^t h_t
\end{gather*}
for holomorphic modular forms~$h_t$ of weight $k - 2t$. Since $d > t_1 + t_2$, each term in this sum is orthogonal to $f$ by Shimura's orthogonality relations~\cite{shimura-1987}~-- also confer~\cite{pitale-saha-schmidt-2015b}.
\end{proof}

\section{Adelic automorphic representations}\label{sec:automorphic-representations}

Automorphic representation theory in most modern settings focuses on adelic representations. That is, one investigates the right regular representation of $\rmG(\bbA)$ on $L^2( \rmG({\mathbb Q})\backslash \rmG(\bbA))$, where $\rmG = \PGSp{g}$. It is an important aspect of the theory that one can split any automorphic representation into a restricted tensor product of local components. The local theory is over $p$-adic fields and the theory over the infinite places.

In the classical theory of modular forms these two aspects of automorphic representation theory are reflected by Hecke operators and covariant differential operators. To support our claim that hyper-algebras of modular forms together with the action of $\bbT$ are mitigating between the classical language and the representation theoretic one, we show how to pass from classical modular forms to the attached local representations and how some of their aspects can be interpreted in terms of hyper-algebras. In particular, we rephrase our result in Section~\ref{sec:structure-results} in terms of tensor products of automorphic representations.

\subsection{Harish-Chandra modules}\label{ssec:automorphic-representations:harish-chandra}

It is formally correct to work with the group $G = \PGSp{g}$ in this subsection. However, as is common, we will instead work with $G = \Sp{g}$, i.e., $G(\RR) = \Sp{g}(\RR)$ for which all aspects that we discuss here are the same. Further, we adopt notation from~\cite{raum-2015a} to shorten this exposition.

Given any almost holomorphic Siegel modular form~$f$, we can associate to it an irreducible Harish-Chandra module. Recall that a Harish-Chandra module is an admissible~\gKmodule. A \gKmodule\ is a representation of $\rmK = \KgRR$ that is simultaneously a $\frakg$-module with $\frakg = \fraksp{g}$, and for which these two structures are compatible. A detailed definition can be found in~\cite[Section~3.3.1]{wallach-1988}. A \gKmodule~$M$ is called admissible if it is a unitarizable~$K$-representation and if $\Hom_K(\sigma, M)$ is finite-dimensional for every finite-dimensional $K$-representation~$\sigma$.

Starting with~$f$, we produce a function~$\rmA_\infty(f)$ on~$G$, $\rmA_\infty$ stands for adelization at the infinite place, which here is isomorphic to~$\RR$:
\begin{gather*}
 \rmA_\infty(f)(g)= \big( f \big|_{\sigma, \rho} g \big) \big(iI^{(g)}\big).
\end{gather*}
It takes values in $V(\sigma) \otimes V(\rho)$, and when contracting with $V(\sigma)^\vee$, we obtain a space of functions that take values in~$V(\rho)$ and which is a $K$-module isomorphic to $\sigma^\vee \otimes V(\rho)$. We denote the contraction by $\rmA_\infty(f) \cdot V(\sigma)^\vee$, and the \gKmodule\ generated by it will be denoted by
\begin{gather*}
 \ov{\rmA}_\infty(f)= (\frakg,\rmK) \big( \rmA_\infty(f) \cdot V(\sigma)^\vee \big).
\end{gather*}

Let $\frakk \subset \frakg$ be the Lie algebra of $K$. It was established in~\cite{raum-2015a} that application of covariant differential operators and the action of the $\frakk$-complement $\frakm \subseteq \frakg$ commute with passing back and forth between modular forms and $K$-types in Harish-Chandra modules. Concretely, we established commutativity of the following diagram, featuring the hyper-derivations $\rmL$ and~$\rmR$:
\begin{center}
\begin{tikzpicture}
\matrix(m)[matrix of math nodes,
column sep = 10em, row sep = 3em,
text height = 1.5em, text depth = 1.25ex]
{ f & \rmA_\infty(f) \\
 \rmR f & \frakm^+ \rmA_\infty(f), \\
};

\path
(m-1-1) edge[|->] node[above] {$\rmA_\infty$} (m-1-2)
(m-2-1) edge[->] node[above] {$\rmA_\infty$} (m-2-2)

(m-1-1) edge[|->] node[left] {$\rmR$} (m-2-1)
(m-1-2) edge[|->] node[right] {$\frakm^+$} (m-2-2);
\end{tikzpicture}%
\hspace*{4em}
\begin{tikzpicture}
\matrix(m)[matrix of math nodes,
column sep = 10em, row sep = 3em,
text height = 1.5em, text depth = 1.25ex]
{ f & \rmA_\infty(f) \\
 \rmL f & \frakm^- \rmA_\infty(f). \\
};

\path
(m-1-1) edge[|->] node[above] {$\rmA_\infty$} (m-1-2)
(m-2-1) edge[->] node[above] {$\rmA_\infty$} (m-2-2)

(m-1-1) edge[|->] node[left] {$\rmL$} (m-2-1)
(m-1-2) edge[|->] node[right] {$\frakm^-$} (m-2-2);
\end{tikzpicture}
\end{center}

The product of vector-valued modular forms then contains information about the tensor product of Harish-Chandra modules. And thus it yields a~lower bound on smooth functions in the tensor product of $G(\RR)$-representations. Given a Harish-Chandra module we can obtain from it a~representation of~$\rmG(\RR)$. In our case, we denote this representation by $\rmG(\RR) \ov{A}_\infty$.
\begin{Corollary}\label{cor:tensor-product-at-infinity}
Given two almost holomorphic Siegel modular forms~$f$ and $g$, we have the following inclusion of \gKmodules:
\begin{gather*}
 \ov{\rmA}_\infty \big( \bbT_\infty f \,\ulotimes\, \bbT_\infty g \big)\subseteq \ov{\rmA}_\infty (f)\otimes \ov{\rmA}_\infty (g).
\end{gather*}

By passing back to representations of $\Sp{g}(\RR)$, we find that
\begin{gather*}
 \rmG(\RR) \ov{\rmA}_\infty \big( \bbT_\infty f \,\ulotimes\, \bbT_\infty g \big) \subseteq \rmG(\RR)\ov{\rmA}_\infty (f) \otimes \rmG(\RR)\ov{\rmA}_\infty (g).
\end{gather*}
\end{Corollary}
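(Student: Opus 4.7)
The plan rests on three compatibility properties of the adelization $\rmA_\infty$ that I would verify first. First, the slash action factorizes on tensor products in the sense that $(f \otimes g)|_{\sigma_f\sigma_g,\rho_f\rho_g} h = (f|_{\sigma_f,\rho_f} h) \otimes (g|_{\sigma_g,\rho_g} h)$; evaluating at $iI^{(g)}$ yields the pointwise identity $\rmA_\infty(f \otimes g) = \rmA_\infty(f) \otimes \rmA_\infty(g)$ of functions on $\rmG(\RR)$. Second, for any $\rmK$-intertwiners $\phi_\sigma$ and $\phi_\rho$ appearing in the defining span of the hyper-product, post-composition commutes with adelization, so $\rmA_\infty((\phi_\sigma \otimes \phi_\rho) \circ (f \otimes g)) = (\phi_\sigma \otimes \phi_\rho) \circ \rmA_\infty(f \otimes g)$. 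Third, the two commutative diagrams displayed just above the corollary record $\rmA_\infty \circ \rmR = \frakm^+ \cdot \rmA_\infty$ and $\rmA_\infty \circ \rmL = \frakm^- \cdot \rmA_\infty$ at the level of $V(\sigma)^\vee$-contractions, so iteration of $\rmT_\infty = \rmL + \rmR$ gives $\rmA_\infty(P(\rmT_\infty) f) \cdot V(\sigma)^\vee \subseteq U(\frakm) \cdot (\rmA_\infty(f) \cdot V(\sigma)^\vee) \subseteq \ov{\rmA}_\infty(f)$ for every $P \in \bbT_\infty$, and similarly for $g$.

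Next I would invoke the standard fact that the tensor product of two $(\frakg, \rmK)$-modules is naturally a $(\frakg, \rmK)$-module via the coproduct $X \cdot (v \otimes w) = Xv \otimes w + v \otimes Xw$ for $X \in \frakg$ together with the diagonal action of $\rmK$. Consequently $\ov{\rmA}_\infty(f) \otimes \ov{\rmA}_\infty(g)$ is already closed under the $(\frakg, \rmK)$-action. A generic generator of $\ov{\rmA}_\infty(\bbT_\infty f \,\ulotimes\, \bbT_\infty g)$ is assembled from polynomials $P, Q \in \bbT_\infty$, intertwiners $\phi_\sigma, \phi_\rho$ matching a chosen target weight and type, the subsequent $V(\sigma)^\vee$-contraction, and the action of $U(\frakg)$ and $\rmK$. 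The three compatibilities above let me push this entire construction through $\rmA_\infty$, landing inside $U(\frakg) \cdot (\ov{\rmA}_\infty(f) \otimes \ov{\rmA}_\infty(g))$, which by closure equals $\ov{\rmA}_\infty(f) \otimes \ov{\rmA}_\infty(g)$. This establishes the first inclusion.

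The second inclusion, at the level of $\rmG(\RR)$-representations, then follows by applying any exact globalization functor from admissible $(\frakg, \rmK)$-modules to $\rmG(\RR)$-representations (for example, the smooth or the Casselman--Wallach globalization), which preserves tensor products and turns an inclusion of Harish-Chandra modules into an inclusion of the generated $\rmG(\RR)$-representations. The step I expect to require the most care is the bookkeeping of the contractions against $V(\sigma)^\vee$ in the definition of $\ov{\rmA}_\infty$: one must check that the intertwiners arising in the hyper-product $\,\ulotimes\,$, which run over all irreducible targets, always land within the $\rmK$-closure of $\ov{\rmA}_\infty(f) \otimes \ov{\rmA}_\infty(g)$ rather than escaping to a strictly larger ambient module. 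This is ensured by the admissibility of the two Harish-Chandra modules in question, together with the fact that $\rmK$ acts diagonally on the tensor product, so that every $\rmK$-type that can be produced by an intertwiner out of $\sigma_f\sigma_g \otimes \rho_f\rho_g$ is already present inside $\ov{\rmA}_\infty(f) \otimes \ov{\rmA}_\infty(g)$.
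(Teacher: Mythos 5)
Your treatment of the first inclusion is sound and follows essentially the same route as the paper: the commutative diagrams identifying $\rmR$, $\rmL$ with $\frakm^+$, $\frakm^-$, the pointwise identity $\rmA_\infty(f \otimes g) = \rmA_\infty(f) \otimes \rmA_\infty(g)$, compatibility of $\rmA_\infty$ with post-composition by intertwiners, and closure of $\ov{\rmA}_\infty(f) \otimes \ov{\rmA}_\infty(g)$ under the coproduct action of $\frakg$ and the diagonal $\rmK$-action. The paper compresses exactly this into the phrase that the claim ``follows from the commutative diagrams above''.

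There is, however, a genuine gap in your second part, and it is precisely the one step the paper's proof singles out as needing verification beyond the diagrams. To pass back to representations of $\Sp{g}(\RR)$, the module $\ov{\rmA}_\infty\big( \bbT_\infty f \,\ulotimes\, \bbT_\infty g \big)$ must be a Harish-Chandra module, i.e.\ admissible: the paper defines $\rmG(\RR)\ov{\rmA}_\infty$ only for Harish-Chandra modules, and any Casselman--Wallach-type globalization takes admissible, finitely generated modules as input. You assert that admissibility is ``ensured by the admissibility of the two Harish-Chandra modules in question, together with the fact that $\rmK$ acts diagonally''. That implication is false in general: the tensor product of two admissible \gKmodules\ need not be admissible. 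For instance, for $\SL{2}(\RR)$, in the tensor product of a holomorphic and an antiholomorphic discrete series every $\rmK$-type occurs with infinite multiplicity, so even submodules cannot be controlled by this reasoning alone. (Your stated worry, that $\rmK$-types produced by intertwiners might ``escape'' the ambient module, is not the real issue; the real issue is finiteness of multiplicities.) What saves the statement here is the specific lowest-weight structure of the modules attached to almost holomorphic forms, and this is what the paper means by verifying admissibility ``when inspecting the action of the center of $\frakk$'': on each factor the center of $\frakk$ acts with spectrum bounded below and finite-dimensional eigenspaces, this property is inherited by the tensor product, and hence every \gKmodule\ contained in it is admissible. Your proposal needs this argument (or an equivalent one); relatedly, the appeal to a globalization functor that ``preserves tensor products'' is unjustified as stated and becomes unnecessary once admissibility is established.
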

\begin{proof}This follows from the commutative diagrams above, except that we have to verify that the hyper-product $\bbT_\infty f \,\ulotimes\, \bbT_\infty g$ yields an admissible \gKmodule. This becomes clear when inspecting the action of the center of $\frakk$.
\end{proof}

\subsection{The finite places}\label{ssec:automorphic-representations:finite-places}

As opposed to the infinite place it is important to insist on the group $\rmG = \PGSp{g}$ when treating the finite places. Recall that as a maximal compact group over $\Qp$, we choose $\KgQp = \rmG(\Zp)$. Also recall that any type in this paper is a congruence type, which means that it gives rise to a~representation over~$\Af$.

To approximate at $p$, we need the relation $\rmG(\Qp) = \rmG({\mathbb Q}) \KgQp$. Consider a vector-valued almost holomorphic Siegel modular form~$f$ of type $\rho = \otimes' \rho_p$. Given $g k \in \rmG(\Qp)$ we set
\begin{gather*}
 \rmA_p(f)(gk)= \rho_p^{-1}(k) \big( f \big|_{\sigma,\rho} g \big)\big( i I^{(g)} \big).
\end{gather*}
As a first remark note that $f$ is constant on $\ker \rho_p \subseteq \KgQp$. The contraction of $\rmA_p(f)$ with $V(\rho_p)^\vee$ will be denoted by $\ov{\rmA}_p(f)$. It is a space of functions on $\rmG(\Qp)$ taking values in $V(\sigma) \otimes V(\otimes'_{p \ne p'} \rho_{p'})$. As a $\KgQp$-module it has isomorphism type $\rho_p^\vee \otimes \big( V(\sigma) \otimes V(\otimes'_{p \ne p'} \rho_{p'}) \big)$.

The commutative diagram corresponding to the one in Section~\ref{ssec:automorphic-representations:harish-chandra} is
\begin{center}
\begin{tikzpicture}
\matrix(m)[matrix of math nodes,
column sep = 10em, row sep = 3em,
text height = 1.5em, text depth = 1.25ex]
{ f & \rmA_p(f) \\
 \rmT_p f & \Delta_p \rmA_p(f), \\
};

\path
(m-1-1) edge[|->] node[above] {$\rmA_p$} (m-1-2)
(m-2-1) edge[->] node[above] {$\rmA_p$} (m-2-2)

(m-1-1) edge[|->] node[left] {$\rmT_p$} (m-2-1)
(m-1-2) edge[|->] node[right] {$\Delta_p$} (m-2-2);
\end{tikzpicture}
\end{center}
where $\Delta_p$ is the vector space with basis consisting of determinant $p$ matrices in $\KgQp$. In analogy with the infinite case, we write $\rmG(\Qp) \ov{\rmA}_p(f)$ for the representation over~$\Qp$ that corresponds to~$f$.
\begin{Corollary}\label{cor:tensor-product-at-p}Given Siegel modular forms~$f$ and~$g$, we have the following inclusion of $\rmG(\Qp)$-representations:
\begin{gather*}
 \rmG(\Qp) \rmA_p \big( \bbT_p f \,\ulotimes\, \bbT_p g \big) \subseteq \rmG(\Qp) \rmA_p (f) \otimes \rmG(\Qp)\rmA_p (g).
\end{gather*}
\end{Corollary}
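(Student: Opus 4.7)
The plan is to mimic the argument of Corollary \ref{cor:tensor-product-at-infinity}, using the commutative square of Section \ref{ssec:automorphic-representations:finite-places} in place of the infinite-place diagrams. The key point is that the action of $\rmT_p$ on a modular form corresponds, after adelization via $\rmA_p$, to right translation by the finitely many elements of $\Delta_p \subset \rmG(\Qp)$ given as coset representatives; these translates are therefore absorbed as soon as one passes to the $\rmG(\Qp)$-span.

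First I would take an arbitrary generator of $\bbT_p f \,\ulotimes\, \bbT_p g$. Unravelling the definition of the $\bbT_p$-action in Section \ref{ssec:hyper-algebras:hecke-operators} and of the hyper-product \eqref{eq:def:hyper-product}, such a generator has the shape
$$h = (\phi_\sigma \otimes \phi_\rho) \circ \bigl((\rmT_p^a f) \otimes (\rmT_p^b g)\bigr),$$
with $a,b \ge 0$ and $\phi_\sigma$, $\phi_\rho$ homomorphisms onto irreducible weight and type components. The commutative diagram in Section \ref{ssec:automorphic-representations:finite-places} identifies $\rmA_p(\rmT_p^c \, \cdot\,)$ with the Hecke translate $\Delta_p^c \rmA_p(\,\cdot\,)$, so that $\rmA_p$ intertwines the action of $\rmT_p$ on modular forms with a specific element of the group algebra of $\rmG(\Qp)$ acting on adelizations by right translation.

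Next I would invoke the comonoidal coherence \eqref{eq:prop:hecke-operator-comonoidal-on-reps:comonoidal-coherence}, in its modular-forms form $\pi_{p,\rho_f,\rho_g}\bigl((\rmT_p f) \otimes (\rmT_p g)\bigr) = \rmT_p(f \otimes g)$, iterated to arbitrary powers. This, together with the diagonal action of a tensor product of types, shows that after composing with $\phi_\sigma \otimes \phi_\rho$ and taking $\rmA_p$, the function $\rmA_p(h)$ lies in a sum of right translates of the pointwise tensor product $\rmA_p(f) \otimes \rmA_p(g)$ by elements of $\rmG(\Qp)$. Passing to the $\rmG(\Qp)$-span on both sides absorbs these translates and yields the desired inclusion.

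The main obstacle will be the careful bookkeeping of weights and types at each step: one has to verify that the comonoidal projections $\pi_{p,\rho,\rho'}$, the contractions against $V(\rho_p)^\vee$ implicit in $\ov{\rmA}_p$, and the homomorphisms $\phi_\sigma$, $\phi_\rho$ defining generators of the hyper-product all commute in the expected way, so that every step of the reduction lands inside $\rmG(\Qp)\rmA_p(f) \otimes \rmG(\Qp)\rmA_p(g)$. Unlike in the infinite-place setting, no separate admissibility check is required here, since smoothness with respect to $\rmG(\Qp)$ is automatic for the functions produced by $\rmA_p$.
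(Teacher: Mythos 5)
The paper offers no written proof of this corollary: it is stated as an immediate consequence of the commutative diagram of Section~\ref{ssec:automorphic-representations:finite-places}, in analogy with Corollary~\ref{cor:tensor-product-at-infinity}. Your overall skeleton is exactly that intended argument: reduce to generators $(\phi_\sigma\otimes\phi_\rho)\circ\big((\rmT_p^a f)\otimes(\rmT_p^b g)\big)$ of the hyper-product, use that $\rmA_p$ intertwines $\rmT_p$ with right translation by the finitely many elements of $\Delta_p\subset\rmG(\Qp)$, and absorb those translates into the $\rmG(\Qp)$-span; your closing remark that no analogue of the admissibility check of Corollary~\ref{cor:tensor-product-at-infinity} is needed (smoothness being automatic from right invariance under the open subgroup $\ker\rho_p\subseteq\KgQp$) is also correct.

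There is, however, one step that fails as stated: the appeal to the comonoidal coherence~\eqref{eq:prop:hecke-operator-comonoidal-on-reps:comonoidal-coherence}. The map $\pi_{p,\rho_f,\rho_g}$ annihilates every off-diagonal component $\frake_{m_1}\otimes\frake_{m_2}$, $m_1\ne m_2$, so the identity $\pi_{p,\rho_f,\rho_g}\big((\rmT_p f)\otimes(\rmT_p g)\big)=\rmT_p(f\otimes g)$ only controls those generators whose type homomorphism $\phi_\rho$ factors through $\pi$; a general generator of $\bbT_p f\,\ulotimes\,\bbT_p g$ uses an arbitrary $\phi_\rho$ on $\big(\rmT_p^a\rho_f\big)\otimes\big(\rmT_p^b\rho_g\big)$, and when $a\ne b$ the coherence map does not even apply. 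Adelizing such a generator produces mixed terms, pointwise tensors of the translates $g\mapsto\rmA_p(f)(g m_1)$ and $g\mapsto\rmA_p(g)(g m_2)$ with $m_1\ne m_2$, and these are \emph{not} right translates of the pointwise tensor $\rmA_p(f)\otimes\rmA_p(g)$, since a translate of the latter moves both factors by the same group element; so the reduction you claim is false in general. Fortunately it is also unnecessary: the right-hand side of the corollary is the full abstract tensor product $\rmG(\Qp)\rmA_p(f)\otimes\rmG(\Qp)\rmA_p(g)$, which contains the decomposable element given by the two translates separately, and the natural pointwise-tensor map from this abstract tensor product to functions (the identification that the corollary's ``$\subseteq$'' already presupposes) is equivariant for the diagonal $\rmG(\Qp)$-action. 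So the correct argument is shorter than yours: adelization of a pointwise tensor is the pointwise tensor of adelizations, the commutative diagram applied to each factor separately converts Hecke operators into translations, and every resulting mixed term already lies in the right-hand side; the coherence map never enters. With that step deleted, your proof is correct and coincides with the paper's intent.
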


\subsection{Global tensor products}\label{ssec:automorphic-representations:global}

We now transfer the statements of Corollaries~\ref{cor:tensor-product-at-infinity} and~\ref{cor:tensor-product-at-p} to the global setting.
\begin{Theorem}\label{thm:global-automorphic-products}Given almost holomorphic Siegel modular forms~$f$ and~$g$, with automorphic representations $\varpi(f)$ and $\varpi(g)$ associated with them. Suppose that for a newform $h$ we have
\begin{gather*}
 \Ind(h)\in \bbT f \,\ulotimes\, \bbT g.
\end{gather*}
Then the associated automorphic representation~$\varpi(h)$ is contained in the tensor product of those associated with~$f$ and $g$:
\begin{gather*}
 \varpi(h)\lhra \varpi(f) \otimes \varpi(g).
\end{gather*}
\end{Theorem}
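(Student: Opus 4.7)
The plan is to reduce the global embedding to the local inclusions of Corollaries~\ref{cor:tensor-product-at-infinity} and~\ref{cor:tensor-product-at-p} via the restricted tensor product decomposition of admissible representations. Since $h$ is a newform, $\varpi(h)$ is irreducible and therefore admits a canonical factorization $\varpi(h) = \varpi_\infty(h) \otimes \bigotimes'_p \varpi_p(h)$; for $\varpi(f)$ and $\varpi(g)$ we work with the analogous local factors generated by the adelizations $\rmA_v(f)$ and $\rmA_v(g)$. Irreducibility of the source then means that producing a single nonzero global intertwiner into $\varpi(f) \otimes \varpi(g)$ is enough.

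First I would exploit the factorization $\bbT = \bbT_\infty \otimes \bigotimes'_p \CC[\rmT_p]$ together with the multiplicativity of adelization in each Hecke factor to test the hypothesis $\Ind(h) \in \bbT f \,\ulotimes\, \bbT g$ place by place. Combined with the commutative diagrams of Sections~\ref{ssec:automorphic-representations:harish-chandra} and~\ref{ssec:automorphic-representations:finite-places} extended along the intertwiners $\phi$ appearing in~\eqref{eq:def:hyper-product}, this yields that for every place $v$ of~${\mathbb Q}$ the local adelization $\rmA_v(\Ind(h))$ lies in $\rmA_v(\bbT_v f \,\ulotimes\, \bbT_v g)$, where $\bbT_v = \bbT_\infty$ if $v = \infty$ and $\bbT_v = \CC[\rmT_p]$ if $v = p$.

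Applying Corollary~\ref{cor:tensor-product-at-infinity} at the archimedean place then yields an inclusion of \gKmodules\ $\varpi_\infty(h) \hookrightarrow \varpi_\infty(f) \otimes \varpi_\infty(g)$, and Corollary~\ref{cor:tensor-product-at-p} applied at each finite prime $p$ yields $\varpi_p(h) \hookrightarrow \varpi_p(f) \otimes \varpi_p(g)$. The remaining step is to assemble these local intertwiners into one adelic intertwiner via the universal property of the restricted tensor product, after which irreducibility of $\varpi(h)$ forces the result to be injective.

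The main obstacle I anticipate is precisely the coherent matching at the unramified places: to glue the local intertwiners into a restricted tensor product, one has to check that at almost every prime $p$ the map of Corollary~\ref{cor:tensor-product-at-p} sends the spherical vector of $\varpi_p(h)$ to a nonzero multiple of the tensor of spherical vectors in $\varpi_p(f) \otimes \varpi_p(g)$. The natural route is to verify that $\rmA_p(\Ind(h))$ has a nonzero component on the spherical line at unramified $p$ and that the embedding produced by Corollary~\ref{cor:tensor-product-at-p} preserves that line; by the Satake isomorphism this amounts to matching the Hecke eigenvalues that $h$ inherits from $f$ and $g$ through the span~\eqref{eq:def:hyper-product} defining the hyper-product. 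Once this is in place the assembled global intertwiner is nonzero and, by the irreducibility of $\varpi(h)$, provides the desired embedding.
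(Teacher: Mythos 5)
Your overall strategy inverts the paper's argument, and in doing so it creates a difficulty that the paper's proof never meets and that your proposal does not resolve. The paper's proof stays global throughout: by strong approximation one adelizes to a single function $\rmA(f) = \rmA_\infty(f) \otimes \bigl( \bigotimes_p \rmA_p(f) \bigr)$ on $\rmG(\bbA)$ generating $\varpi(f)$, and the commutative diagrams of Sections~\ref{ssec:automorphic-representations:harish-chandra} and~\ref{ssec:automorphic-representations:finite-places} show that $\rmA_\infty( \bbT_\infty f ) \subset V(\varpi_\infty)$ and $\rmA_p( \bbT_p f ) \subset V(\varpi_p)$. Consequently the adelization of any element of $\bbT f \,\ulotimes\, \bbT g$ --- in particular $\rmA(\Ind(h))$ --- is a single vector inside $\varpi(f) \otimes \varpi(g)$, and since $h$ is a newform the cyclic $\rmG(\bbA)$-representation generated by that vector is the irreducible $\varpi(h)$. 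The embedding is then simply the inclusion of a cyclic subrepresentation; no local intertwiners ever need to be glued, because the factorizable vector $\rmA(\Ind(h))$ already carries a coherent choice of local vectors, spherical at almost all places, for free.

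Two concrete problems with your route. First, the place-by-place reduction is not valid as stated: an element of $\bbT f \,\ulotimes\, \bbT g$ is a span of terms $\phi \circ (F \otimes G)$ in which $F \in \bbT f$ and $G \in \bbT g$ involve Hecke operators at several places simultaneously, and the homomorphisms $\phi$ mix the resulting types; so $\rmA_p(\Ind(h))$ need not lie in $\rmA_p\bigl( \CC[\rmT_p] f \,\ulotimes\, \CC[\rmT_p] g \bigr)$ --- indeed the values of these functions do not even live in the same space once a Hecke operator at a prime $q \ne p$ has altered the type at $q$, so at best one gets a statement up to isomorphism of the generated $\rmG(\Qp)$-representations, which is weaker than what your gluing step needs. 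Second, and decisively, the step you yourself flag as the main obstacle --- that at almost every $p$ the local embedding sends the spherical vector of $\varpi_p(h)$ to a nonzero multiple of the tensor product of spherical vectors, so that the local maps assemble into a map of restricted tensor products --- is exactly the content that must be proved, and your proposal only sketches ``the natural route'' via the Satake isomorphism without carrying it out. Since this gluing is both unproven in your write-up and entirely avoidable (the paper's global cyclic-vector argument renders it moot), the proposal as it stands does not prove the theorem.
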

\begin{proof}Using strong approximation, we obtain a map from holomorphic Siegel modular forms~$f$ to functions
\begin{gather*}
 \rmA(f)= \rmA_\infty(f) \otimes \Big( \bigotimes_p \rmA_p(f) \Big)
\end{gather*}
on $\rmG(\bbA)$, which generate an automorphic representation~$\varpi(f) = \varpi_\infty(f) \otimes \otimes'_p \varpi_p(f)$ associated with~$f$. Note that the Harish-Chandra module attached to a representation of $\rmG(\RR)$ consists of the subspace of smooth vectors. In particular, it is a subspace of $V(\varpi_\infty(f))$. The commutative diagrams in Sections~\ref{ssec:automorphic-representations:harish-chandra} and~\ref{ssec:automorphic-representations:finite-places} show that $\rmA_\infty ( \bbT_\infty f ) \subset V(\varpi_\infty)$ and $\rmA_p ( \bbT_p f ) \subset V(\varpi_p)$.
\end{proof}

As a consequence of Theorem~\ref{thm:global-automorphic-products}, we can reinterpret~\eqref{eq:surjectivity:genus-1-finite-places} and~\eqref{eq:surjectivity:genus-1-infinite-place} in terms of tensor products of global representations. This provides a proof of Theorem~\ref{thm:main-theorem:automorphic}.

\subsection*{Acknowledgements}

The author was partially supported by Vetenskapsr{\aa}det Grant~2015-04139. The author is grateful to the referees for various helpful comments.

\pdfbookmark[1]{References}{ref}
\LastPageEnding

\end{document}